\definecolor{grayblue}{rgb}{0.09,0.32,0.44} 
\newtheorem{theorem}{Theorem}[section]
\newtheorem{proposition}{Proposition}[section]
\newtheorem{lemma}{Lemma}[section]
\newtheorem{corollary}{Corollary}[section]
\newtheorem*{definition*}{Definition}
\theoremstyle{remark}
\newtheorem*{remark*}{Remark}
\newcommand{\be}[1]{\begin{equation}\label{#1}}
\newcommand{\ee}{\end{equation}}
\newcommand{\ban}{\begin{align*}}
\newcommand{\ean}{\end{align*}}
\newcommand{\ba}[1]{\begin{align}\label{#1}}
\newcommand{\ea}{\end{align}}
\newcommand{\ben}{\begin{equation*}}
\newcommand{\een}{\end{equation*}}
\numberwithin{equation}{section}
\newcommand{\bbE}{\mathbb{E}}
\newcommand{\bbP}{\mathbb{P}}
\newcommand{\bbR}{\mathbb{R}}
\newcommand{\bbZ}{\mathbb{Z}}
\newcommand{\sfA}{{\sf A}}
\newcommand{\ep}{\varepsilon}
\newcommand{\IDLA}{\ensuremath{\mathrm{IDLA}}\xspace}
\newcommand{\rIDLA}{\ensuremath{\mathrm{uIDLA}}\xspace}
\newcommand{\DA}{D\hspace{-1pt}A}
\newcommand{\A}{A}
\newcommand{\T}{\ensuremath{\mathcal{T}}}
\newcommand{\prob}{\ensuremath{\mathbb{P}}}
\newcommand{\Add}{{\rm Add}}
\DeclareMathOperator{\Bin}{Bin}
\newcommand{\rk}[1]{\bgroup\color{red}%
  \par\medskip\hrule\smallskip%
  \noindent\textbf{#1}%
  \par\smallskip\hrule\medskip\egroup}
\def\afs#1#2{\href{#1}{\nolinkurl{#2}}}
\def\afs#1#2{\burlalt{#1}{#2}}
\newcounter{const}
\newcounter{Const}
\def\newCnotext#1{
\refstepcounter{Const}
\label{#1}
}
\def\cref#1{\ensuremath{\textrm{\textup{\ref{#1}}}}}
\def\newC#1{{\newCnotext{#1}\cref{#1}}}
\def\newcnotext#1{
\refstepcounter{const}
\label{#1}
}
\def\newc#1{{\newcnotext{#1}\cref{#1}}}
\title{Internal diffusion-limited aggregation with uniform starting points}
\author{Itai Benjamini \and Hugo Duminil-Copin \and Gady Kozma \and Cyrille Lucas}
\date{\today}
\begin{document}
\maketitle

\begin{abstract}
We study internal diffusion-limited aggregation with uniform starting points on $\mathbb Z^d$. In this model, each new particle starts from a vertex chosen uniformly at random on the existing aggregate. We prove that the limiting shape of the aggregate is a Euclidean ball.

\bigskip

Nous étudions le modèle d'agrégation limitée par diffusion interne avec points de départ uniformes sur $\mathbb Z^d$. Dans ce modèle, chaque nouvelle particule est ajoutée à un point choisi uniformément au hasard parmi ceux de l'agrégat existant. Nous prouvons que l'agrégat normalisé admet comme forme limite la boule euclidienne.

\bigskip
\emph{Keywords :} Growth model, Random walk, IDLA, Harmonic measure. \\
\emph{Mots-Clés :} Modèle de croissance, Marche aléatoire, IDLA, mesure harmonique.
\end{abstract}

\section{Introduction}

\subsection{Historical introduction and motivation}

Internal diffusion-limited aggregation (\IDLA)  was introduced by Diaconis and Fulton in \cite{DF}, and gives a protocol for recursively building a random aggregate 
of particles. At each step, the first vertex visited outside the current aggregate by a random walk started at the origin is added to the aggregate. In a number of settings, this model is known to have a deterministic limit-shape,
meaning that a random aggregate with a large number of particles has a typical shape.
On $\bbZ^d$, Lawler, Bramson and Griffeath \cite{lawler1992internal} were the first to identify this limit-shape, in the case of simple random walks, as the Euclidean ball. Their result was later sharpened by Lawler \cite{lawler1995subdiffusive}, and was recently drastically improved with the simultaneous works of Asselah and Gaudill\`ere \cite{asselah2010logarithmic, asselah2010sub} and Jerison, Levine and Sheffield \cite{jerison2010internal,jerison2010logarithmic,jerison2011internal}, where logarithmic bounds are proved for fluctuations of the boundary. 

The \IDLA model has been extended in several contexts including drifted random walks \cite{Luc}, Cayley graphs of finitely generated groups \cite{Bla04,BB07,DLY13,Hus08} and random environments \cite{DLYY11,She10}. 

Another interesting growth model is provided by Richardson's model \cite{Ric73}, which is defined as follows. At time $0$, only the origin is occupied. A vacant site becomes occupied at an exponential time with a rate proportional to the number of occupied neighbours, and once occupied a site remains occupied.  The set of vertices occupied by time $t$ is the ball of radius $t$ centered at the origin in first passage percolation with exponential clocks (see \cite{Kes86}). Eden \cite{Ede61} first asked about the shape of this process on Euclidean lattices and Richardson proved that a limiting shape exists. It is believed that the convex centrally symmetric limiting shape is not a Euclidean ball. This was established by Kesten in high dimensions (unpublished, but see \cite{CEG11}) together with the fact that the boundary has $t^{1/3}$ fluctuations, a long standing conjecture. 

Internal diffusion-limited aggregation with uniform starting points (from here on shortened to \rIDLA) is a growth model interpolating between standard internal diffusion-limited aggregation and Richardson's model. In \rIDLA, particles are born uniformly on the shape and relocate to the outer boundary according to harmonic measure seen from the site they appeared at. While usual \IDLA approaches rely on estimating the number of visits to a given point by particles starting from the origin, either directly or as the solution to a discrete partial differential equation, the study of \rIDLA is more difficult because of the self-dependence involved in the construction.   

Another related model is \emph{excited to the center}. In this model a single particle walks around the lattice $\bbZ^d$ doing simple random walk, except when it arrives at a vertex it never visited before (``a new vertex''), in which case it gets a drift towards the point $0$. To compare excited to the center to the models described so far, think about standard \IDLA as a single particle which, upon reaching a new vertex, is teleported to $0$; and about \rIDLA as a single particle which, upon reaching a new vertex, is teleported to a random location in the visited area. Very little is known about random walk excited to the center --- there is an unpublished result showing that it is recurrent in all dimensions, but the shape of visited vertices is very far from being understood. Simulations and some heuristics indicate that at time $t$ the set of visited vertices should be a ball with radius approximately $t^{1/(d+1)}$.

For \rIDLA, we show that the limiting shape is a Euclidean ball, hence showing a behaviour close to the standard \IDLA behaviour. Yet, the boundary fluctuations are expected to be slightly stronger than that of standard \IDLA. This is not surprising, since part of the growth is due to particles emerging near the boundary thus behaving very roughly like the Richardson model. This suggests that the local regularity will be determined by some competition between particles born locally \textit{\`a la} Richardson and particles arriving from far away as in standard \IDLA. Furthermore, simulations like the one we present below seem to indicate a mesoscopic shift in the center of mass of the cluster, which occurs in a random direction. This paper deals with the limiting shape and not the fluctuations.

\begin{figure}
\begin{center}
\includegraphics[scale=0.29]{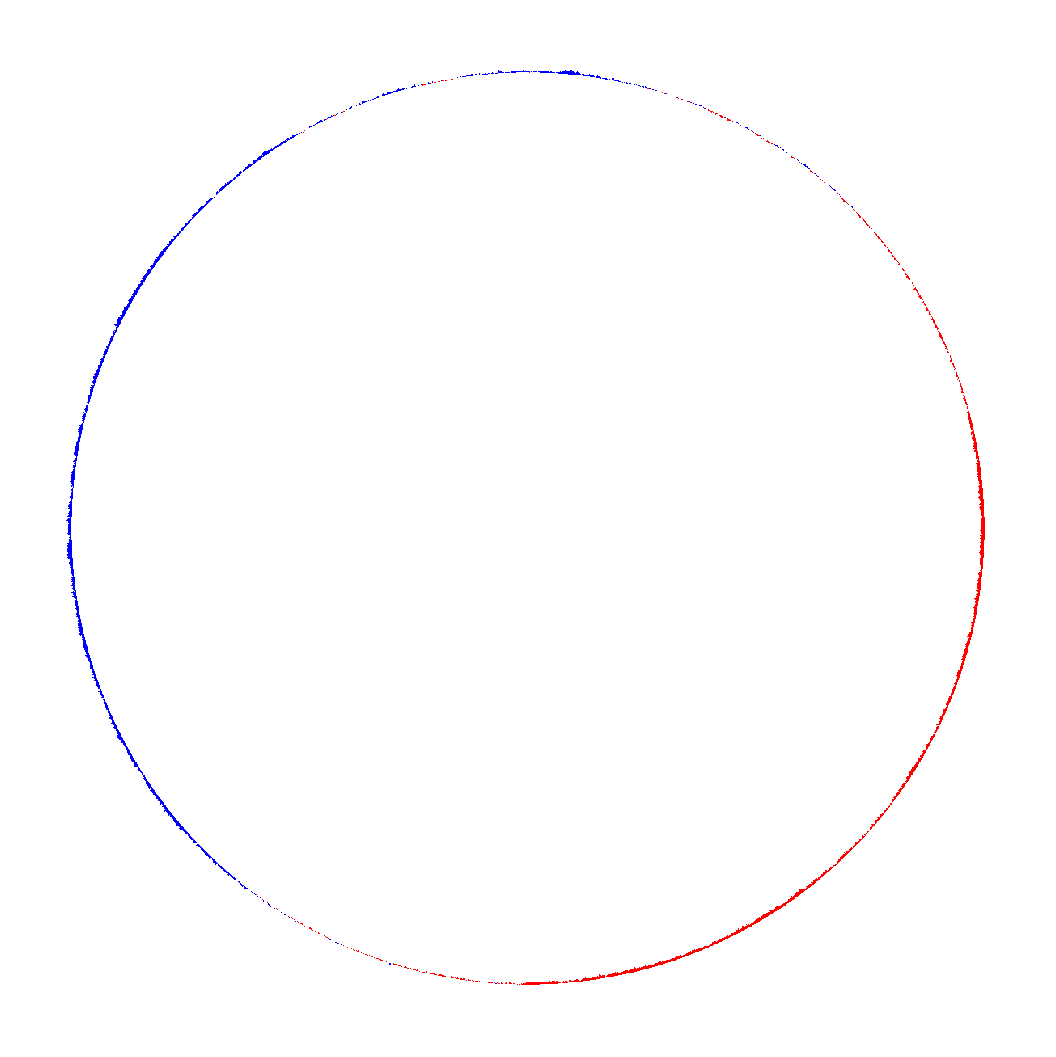}
\caption{Symmetric difference between the two-dimensional \rIDLA aggregate and the Euclidean ball, with $10^6$ particles. Blue points are present in the aggregate but not the ball, whereas it is the other way around for red points.}
\end{center}

\end{figure}

\subsection{Definition of the model and statement of the main theorem}
We consider the lattice $\bbZ^d$ with $d\ge 1$. Let $S \subseteq \bbZ^d$ be a finite subset of $\bbZ^d$. 

In order to define both standard and uniform starting point \IDLA, first define the action of adding a particle to an existing aggregate $S$. Let $\xi = (\xi(0),\xi(1),\ldots)$
be a random walk on $\bbZ^d$ and let $t_S$ be the first time this walk is not in $S$. By random walk we mean the simple random walk choosing one of its $2d$ neighbours uniformly and independently at random at each step.  Define
$$\Add[\xi,S]:=S\cup\{\xi(t_S)\}.$$

\paragraph{Standard IDLA}

Fix an integer $n \geq 0$.
Let $\DA_n$ be the aggregate with $n$ particles started at $0$, constructed inductively as follows: $\DA_0=\emptyset$ and
$$\DA_{n+1}:=\Add[\xi_{n}^0,\DA_n]$$ where $\xi^0_{n}$ is a random walk starting at $0$ which is independent from $\xi^0_0,\ldots,\xi^0_{n-1}$. 
This process is referred to as \IDLA.

Note that the equivalent initialisation $\DA_1 = \{0\}$ is sometimes used.

\paragraph{IDLA with uniform starting point} Fix an integer $n\geq 0$. Let $\A_n$ be the uniform starting point aggregate with $n$ particles constructed inductively as follows: $\A_1=\{0\}$ and
$$\A_{n+1}:=\Add[\xi^{X_{n}}_{n},\A_n]$$
where $X_n$ is a point chosen uniformly on $\A_n$, and $\xi^{X_{n}}_{n}$ is a random walk starting at $X_n$ and independent of $\xi^{X_0}_0,\ldots,\xi^{X_{n-1}}_{n-1}$. This process is referred to as \rIDLA.
\medbreak
\noindent Let $|\cdot|$ be the Euclidean distance in $\bbR^d$. For $n>0$, let $B[n]:=\{y\in \bbZ^d:|y|\le n\}$ and $b_n:=|B[n]|$. 

\begin{theorem}\label{thm:main}
Let $d\ge 2$. There exists positive constants $c_\newc{c:thm}$, $c_\newc{c:thm2}$, $C_\newC{C:Thm}$ and $C_\newC{C:Thm2}$ depending only on the dimension, such that almost surely, 
$$B\Big[n(1-C_\cref{C:Thm}n^{-c_\cref{c:thm}}) \Big] \subseteq \A_{b_n}\subseteq B\Big[n(1+C_\cref{C:Thm2}n^{-c_\cref{c:thm2}})\Big]$$
for $n$ large enough.
\end{theorem}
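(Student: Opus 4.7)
The plan is to prove both inclusions by an induction on scales in the spirit of the Lawler--Bramson--Griffeath argument \cite{lawler1992internal} for standard \IDLA, keeping the inner and outer bounds coupled so that they can feed each other across scales. Throughout, write $T_k := \inf\{m : \A_m \not\subseteq B[k]\}$; the outer bound amounts to controlling $T_k$ from below, while the inner bound amounts to showing that, by the time the aggregate leaves $B[k]$, it has filled most of $B[k-k^{1-c}]$ for some small $c > 0$.

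For the inner bound, fix a target vertex $z$ with $|z| \leq n(1-\epsilon_n)$. Following LBG, one compares the number $N(z)$ of the first $b_n$ particles whose walk visits $z$ before exiting a suitable reference domain with the number $L(z)$ of those particles that visit $z$ but continue past it; on the event $z \notin \A_{b_n}$ the two quantities coincide, so a gap between their expectations combined with concentration forces $z \in \A_{b_n}$ with high probability. The strong Markov property makes $L(z)$ stochastically dominated by a sum of independent Bernoullis and hence concentrated, so the main task is to lower bound $\mathbb E[N(z)]$. In \rIDLA this quantity is a sum over $m$ of hitting probabilities from the random starting points $X_m$ to $z$ (expressible through Green's functions in suitable domains $D_m$ comparable to $\A_m$), and the inductive hypothesis that $\A_m$ contains a ball of radius $\Theta(m^{1/d})$ lets us treat $X_m$ as approximately uniform over such a ball, after which classical Green's function asymptotics give a lower bound of the correct order.

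For the outer bound, the key input is that whenever $\A_m \supseteq B[r]$ and $\A_m \subseteq B[R]$ with $R-r$ a small power of $r$, the exit distribution of a simple random walk started at a uniform point of $\A_m$ is comparable, up to Harnack-type constants, to the uniform measure on $\partial B[r]$. Consequently the angular distribution of newly added boundary sites is close to uniform, and Gaussian-type concentration across dyadic scales prevents the appearance of bumps beyond radius $r\bigl(1 + O(r^{-c})\bigr)$. The two halves are coupled by the induction: the outer bound at scale $k$ is needed to confine the domains $D_m$ appearing in the inner bound, while the inner bound at scale $k$ is needed to ensure the starting points $X_m$ lie in the bulk and hence have exit distribution close to uniform on the sphere.

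The main obstacle, and the substantive difference from standard \IDLA, is the self-dependence of the starting points $X_m$ on the past aggregate, which rules out a direct reduction to independent random walks from a fixed source. The induction is designed precisely to break this circularity: within each step one uses only harmonic-measure and Green's-function estimates that are robust under a two-sided a priori control on $\A_m$, and the small loss at each scale is absorbed into the polynomial factors $n^{-c_\cref{c:thm}}$ and $n^{-c_\cref{c:thm2}}$ of the statement.
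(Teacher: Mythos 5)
Your plan diverges substantially from the paper's, and for the outer bound it runs into a concrete obstruction that the paper's argument was designed around.

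\paragraph{Where the approaches differ.} The paper does not run an LBG-style $N(z)$ versus $L(z)$ counting argument at all. Instead, for the inner bound it proves two one-step stochastic dominations (Lemmas \ref{win some} and \ref{stochastic domination}) that reduce a uIDLA step to a standard \IDLA\ step launched from the origin with a controlled Bernoulli loss, then invokes the known \IDLA\ shape theorem as a black box. The inner bound for the actual process is then obtained by a bootstrap: a crude exponential-time initial bound (Lemma \ref{initiate}), an improvement step (Proposition \ref{win always}) powered by the coupling Lemma \ref{crucial lemma}, and iteration (Proposition \ref{sharp inner}). For the outer bound, the paper introduces a genealogical forest $\T_k$ recording which particle spawned which, shows (Lemma \ref{lem:distance}, via a comparison with a continuous-time Yule-type tree) that the reaching time of every vertex is $O(\log^2 n)$ with superpolynomial probability, and combines this with a sparse-annulus crossing estimate (Lemma \ref{lem:exit prob}) to conclude that no particle can wander more than $\log^2 n$ thin annuli past the inner ball.

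\paragraph{The gap in your outer bound.} You assert that ``the exit distribution of a simple random walk started at a uniform point of $\A_m$ is comparable, up to Harnack-type constants, to the uniform measure on $\partial B[r]$.'' This is false uniformly over starting points: the Harnack constant degenerates as the starting point approaches the boundary, and a particle born at or near the current boundary exits through a harmonic measure that is highly concentrated near its birthplace. In uIDLA a non-negligible fraction of particles are born within $O(1)$ of the boundary, and, worse, each such particle can extend a spike slightly and make it \emph{more} likely that the next particle is born on that spike. This is precisely the self-reinforcing scenario the paper flags (``a new particle may start on the furthermost point of the cluster, which complicates the situation'') and precisely what the genealogical tree depth bound is built to rule out: a spike of depth $k$ requires a chain of $k$ parent-child events, and the tree depth is $O(\log n)$. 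Your ``Gaussian-type concentration across dyadic scales'' heuristic has no mechanism to kill this iterated spike growth, so as written the outer bound does not go through. Your inner-bound plan is more plausible as an alternative route (and the dependence on the outer bound for a priori confinement of the domains $D_m$ is correctly identified), but it is not what the paper does, and it inherits the circularity: you still need a working outer bound to close the induction.
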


\paragraph{Remarks} In dimension $1$, the \rIDLA\ aggregate with $n$ points is a set of consecutive integers of length $n$, therefore it is entirely determined by the position of its middle point (called $M_n$). It is clear, either from a quick computation using the gambler's ruin or from a symmetry argument, that the probability for the cluster to grow on either of the two sides is exactly $1/2$. Therefore the process $M_n$ is exactly a simple random walk on integers and half-integers, and the behaviour of the cluster is obvious, with a law of large numbers and CLT fluctuations.

In dimensions bigger than $2$ we expect much smaller fluctuations, and our theorem is not satisfactory in this regard.  We have chosen not to optimize the $n^{-c}$, mainly in order to help alleviate notations, but also because we do not hope to capture the true order of the error term with our method.

\subsection{Structure of the paper}
The first section contains five lemmas. They provide useful information on comparing \rIDLA to \IDLA. As they are of interest on their own, we isolate them from the proof of the theorem.

The second section of the article deals with the stability properties of the Euclidean ball under the \rIDLA process. We first investigate the claim that the process started from a configuration that includes a ball will contain a growing ball with high probability. Then we take the converse and prove that the process started from any configuration inside a ball will stay contained in a slightly bigger growing ball. To prove this statement we examine the cluster together with the genealogical tree describing the starting points of the random walks. 
Our proof involves a comparison with a First Passage Percolation process on random trees.

In our third section, we bring these elements together for a proof of our theorem. The inner bound is proved first, using a refinement method that relies heavily on our coupling properties. The outer bound is then proved using the genealogical construction from the previous section.

\paragraph{Further notation}
For every $y\in \bbZ^d$, let $\prob_y$ denote the law of a simple random walk on $\bbZ^d$ starting from $y$. For a set $S\subset\bbZ^d$ we will denote by $\partial S$ the set of vertices in $\bbZ^d\setminus S$ with a neighbour (or more than one) in $S$.



\section{Comparison lemmas}

We start with the following notations which will enable us to state our lemmas more easily. Given vertices $x_1,\ldots,x_k$ in $\bbZ^d$, define $\DA_{x_1,\ldots,x_k}(S)$ to be the \IDLA aggregate formed by launching additional particles from points $x_1,\ldots,x_k$. Note that $x_i$ need not be in the set $S$. Naturally, for $x\not\in S$, $\DA_{x}(S)=S\cup\{x\}$ deterministically. Recall that, classically, the law of the aggregate does not depend on the order in which these particles are added. Therefore, we also define $\DA_X(S):=\DA_{x_1,\ldots,x_k}(S)$, where $X$ is the multi-set $X=\{x_1,\ldots,x_k\}$. If the multi-set $X$ is just $k$ repetitions of the origin, we denote for conciseness $\DA_k(S):=\DA_X(S)$. Remark that this notation is consistent with our initial definition, in that $\DA_n=\DA_n(\emptyset)$.

Similarly, for the \rIDLA process, we denote $\A_1(S)$ the result of adding a particle started uniformly on $S$ to the set $S$. We also denote $\A_k(S)$ for $k\in \mathbb{N}$ the result of the recursive process of adding $k$ particles to the aggregate $S$, where the first one starts uniformly on $S$, and the $j$-th particle starts uniformly on $\A_{j-1}(S)$.

We start with the following lemma. It states that the aggregate obtained by launching $k$ particles from arbitrary points in $ B[n/2]$ is bigger than the aggregate obtained by launching a smaller yet comparable number of particles from the origin.

\begin{lemma}\label{win some}
There exists $\eta>0$ (depending only on the dimension) such that for any multi-set $X$ of cardinality $k$ in $ B[n/2]$, $\DA_X( B[n])$ stochastically dominates $\DA_{ \kappa }( B[n])$, where $\kappa$ follows a binomial distribution $\mathcal{B}(k, \eta)$.
\end{lemma}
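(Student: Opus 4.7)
I would process the particles of $X$ one at a time using the abelian property of \IDLA, setting $S_i := \DA_{x_1,\dots,x_i}(B[n])$ so that $S_0 = B[n]$ and $B[n] \subseteq S_i$ for all $i$. In parallel I would build a standard \IDLA\ from the origin inside $B[n]$, whose particles are ``extracted'' from those walks $\xi^{x_i}_i$ that can be coupled to a walk from $0$ with the same exit vertex from $S_{i-1}$. The binomial $\kappa \sim \Bin(k,\eta)$ will count the successful extractions, and the resulting extracted aggregate will be contained in $\DA_X(B[n])$.

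\textbf{Harnack-based coupling.} The key comparison comes from a discrete Harnack inequality. For each $y \in \partial S_{i-1}$, the map $z \mapsto \mu_z^{S_{i-1}}(y) := \prob_z[\xi \text{ exits } S_{i-1} \text{ at } y]$ is a nonnegative harmonic function on $S_{i-1}$ and hence on the ball $B[n] \subseteq S_{i-1}$. Since $0$ and $x_i$ both lie in $B[n/2]$, which sits at distance at least $n/2$ from $\partial B[n]$, discrete Harnack produces a universal constant $C=C(d)$ with $\mu_0^{S_{i-1}}(y) \le C\,\mu_{x_i}^{S_{i-1}}(y)$ for every $y$. A standard maximal coupling of two probability measures whose ratio is bounded by $C$ then yields walks $(\xi^{x_i}_i, \tilde\xi^0_i)$ on a common probability space, started respectively at $x_i$ and at $0$ and both stopped on first exit from $S_{i-1}$, that share the same exit vertex $y_i$ with conditional probability at least $\eta := 1/C$, uniformly in $x_i \in B[n/2]$ and in the history. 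Setting $M_i$ to be the match event at step $i$, the total count $\kappa := \sum_{i=1}^k \mathbf{1}_{M_i}$ stochastically dominates $\Bin(k,\eta)$.

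\textbf{Extraction and main obstacle.} On each $M_i$, I feed $\tilde\xi^0_i$ into the parallel \IDLA\ from $0$ as its next particle: running it inside the current aggregate $A^0_{j-1} \subseteq S_{i-1}$, it exits at some vertex $z_j \in \partial A^0_{j-1}$. Because the whole trajectory of $\tilde\xi^0_i$ up to its exit from $S_{i-1}$ is contained in $S_{i-1} \cup \{y_i\} = S_i$, and because exit from the smaller set $A^0_{j-1}$ happens no later than exit from $S_{i-1}$, we obtain $z_j \in S_i \subseteq \DA_X(B[n])$. Inductively $A^0 \subseteq \DA_X(B[n])$, and since the extracted walks are conditionally independent simple random walks from $0$, $A^0$ is a bona fide realization of $\DA_\kappa(B[n])$, which by monotonicity in the number of particles dominates $\DA_{\Bin(k,\eta)}(B[n])$. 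The main obstacle is securing $\eta$ as a dimension-only constant: the naive event ``$\xi^{x_i}_i$ visits $0$ before leaving $S_{i-1}$'' has probability $\Theta(1/\log n)$ in $d=2$ and $\Theta(n^{-(d-2)})$ in $d \ge 3$ and thus does not support a uniform $\eta$, while matching exit vertices through Harnack, rather than forcing the walk through the origin, avoids this decay entirely.
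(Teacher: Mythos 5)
Your proof is correct and follows essentially the same approach as the paper: both use the Harnack inequality for the exit distributions $h_y(\cdot)$ on $B[n/2]$ to obtain a uniform splitting probability $\eta$, then build a coupled \IDLA\ from the origin out of the extracted walks and argue by monotonicity that it stays inside the running aggregate. The paper phrases the extraction as ``$\xi^0$ killed at $0$ with probability $1-\eta$'' while you phrase it via a maximal coupling of exit measures, but these are the same construction once one lifts the coupling from exit vertices to full trajectories (which is needed, since the extracted walk must be stopped at the smaller set $A^0_{j-1}$, not at $S_{i-1}$).
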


\begin{proof}
Let $x$ be a point in $B[n/2]$ and $A$ a set containing $B[n]$. We consider the function evaluating the probability that the random walk starting at $x$ exits $A$ through a point $y$. 
We consider the stopping time $\tau_{A}=\inf \left\{ t: \xi^x(t) \notin A \right\}$ and the function

$$h_y(x) = \prob_x \left(\xi^{x}(\tau_{A}) = y \right). $$

This function is harmonic in $x$ on $B[n-1]$, hence the Harnack inequality \cite[Theorem 6.3.9]{LL10} implies that there exists $\eta>0$ such that for all $A \supseteq B[n], x \in B[n/2]$ and $y \notin A$,

$$h_y(x) \geq \eta h_y(0).$$

This inequality allows to construct a coupling between $\DA_X$ and $\DA_\kappa$ as follows. Let $E_k$ and $F_k$ be constructed recursively. Set $E_0= F_0=B[n]$. Index sites of $X$ by $\{x_1,\dots,x_{|X|}\}$. Assume that $F_k\subseteq E_k$ have been constructed. Construct $E_{k+1}=\Add[\xi_{k+1}^{x_{k+1}},E_k]$. Consider a killed random walk $\xi^0_{k+1}$ coupled with $\xi_{k+1}^{x_{k+1}}$ in such a way that:
\begin{itemize}
\item $\xi_{k+1}^0$ is killed at 0 with probability $1-\eta$.
\item if $\xi_{k+1}^0$ exit $E_{k}$ through $y$, so does $\xi_{k+1}^{x_{k+1}}$, 
\end{itemize}
The existence of this coupling is guaranteed by $h_y(x) \geq \eta h_y(0)$. After exiting $B[n]$ we couple the walks in the usual way: they walk together until exiting their respective aggregates (since $F_k\subseteq E_k$, the walk on $F_k$ would exit first). Construct $F_{k+1}=\Add[\xi_{k+1}^0,F_k]$ if the particle is not killed. Note that $F_{k+1}\subseteq E_{k+1}$, since either $\xi_{k+1}^0$ exits $F_k$ throughout a point of $E_k$, or it does through a point not in $E_k$, but in this case the coupling guarantees that the exiting point is in $E_{k+1}$. The total number of coupled particles, $\kappa$, follows a binomial distribution with parameters $(k, \eta)$.
\end{proof}

The following lemma controls the behaviour of a standard \IDLA with $M$ points started with a Euclidean ball $B[n]$ already occupied. It closely follows the spirit of \cite{asselah2010logarithmic}, but instead of pushing the precision to get the best almost sure bound, we only look at points that are far enough from the edge of the theoretical shape to keep the probability of inclusion exponentially close to $1$. 
\begin{lemma}\label{st-exp-bound}
For any $n \in \mathbb{R}^+$ and $N \in \mathbb{N}^*$, let us write $r=r_{n,N}=\frac{N}{b_n}.$ Then we have $$ \mathbb{P}\left( B\left[n \left(1+r-r^{3/2}\right)^{1/d}\right] \subset \DA_{ N }\big(B[n] \big) \right) \geq 1 -  \exp\left(-C_{\newC{C:st-xp-bound}}nr^{3/2}\right).$$
\end{lemma}
We will use this lemma in the window where $n r^{3/2}$ is large but $r$ is small, so that $r-r^{3/2} \geq 0$ (otherwise the lemma is true but useless).

\begin{proof}
Our lemma is almost exclusively a consequence of the many ideas provided in \cite{asselah2010logarithmic}. Therefore, we refer the reader to the appendix, in which we give a guide to the modifications one needs to do in \cite{asselah2010logarithmic} to get this result.

\end{proof}

The next two lemmas propose stochastic dominations between standard \IDLA and the \rIDLA process. We start with a lemma that compares one step. 

\begin{lemma}\label{stochastic domination}

There exists a constant $C_\newC{C:A dom DA}>0$ such that, if $B[n]\subset S\subset T$ then $\A_1(T)$ stochastically dominates $\DA_{\delta}(S)$, where $\delta$ is a Bernoulli variable with parameter $\frac{|B[n]|}{|T|}(1-\frac{C_\cref{C:A dom DA}}{n})$.

\end{lemma}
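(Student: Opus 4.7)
The plan is to construct a coupling of $(\A_1(T),\DA_\delta(S))$ under which the inclusion $\DA_\delta(S)\subseteq\A_1(T)$ holds almost surely. I sample $X$ uniformly on $T$ together with a simple random walk $\xi^X$ from $X$, and set $Y:=\xi^X(\tau_T)$, so that $\A_1(T)=T\cup\{Y\}$. The first useful observation is that the first exit point $Z:=\xi^X(\tau_S)\in\partial S$ of $\xi^X$ from $S$ always lies in $\A_1(T)$: either $Z\in T\subseteq\A_1(T)$, or else $Z\notin T$, in which case the walk must exit $T$ at the same time, forcing $Z=Y\in\A_1(T)$. Thus, to obtain $\DA_\delta(S)\subseteq\A_1(T)$ on the event $\{\delta=1\}$, it suffices to produce a walk $\xi^0$ from $0$ with $\xi^0(\tau_S)=Z$.

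If $X\notin B[n]$ (probability $1-|B[n]|/|T|$), I set $\delta=0$ and let $\xi^0$ be an independent walk from $0$; the inclusion $\DA_0(S)=S\subseteq\A_1(T)$ is automatic. Condition henceforth on $X\in B[n]$: the exit $Z$ is then distributed according to
\ben
\bar\nu(y):=\frac{1}{|B[n]|}\sum_{x\in B[n]}h_y(x), \qquad h_y(x):=\prob_x(\xi(\tau_S)=y), \quad y\in\partial S,
\een
while the \IDLA exit distribution from $0$ is $\nu_0(y):=h_y(0)$. The key estimate to prove is
\be{plan-tv}
\|\bar\nu-\nu_0\|_{\mathrm{TV}} \,\le\, \frac{C}{n}
\ee
for some constant $C=C(d)>0$. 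Granted \eqref{plan-tv}, a maximal coupling lets me sample $Y^0\sim\nu_0$ jointly with the already-constructed $Z$ so that $\prob(Y^0=Z)\ge 1-C/n$; I then take $\xi^0$ to be a walk from $0$ conditioned on $\xi^0(\tau_S)=Y^0$ (marginally a walk from $0$), and set $\delta:=\mathbf{1}\{X\in B[n]\}\,\mathbf{1}\{Y^0=Z\}$. After trivial thinning this yields a Bernoulli variable of parameter exactly $\tfrac{|B[n]|}{|T|}(1-C/n)$; and on $\{\delta=1\}$ one has $\xi^0(\tau_S)=Z\in\A_1(T)$, hence $\DA_1(S)\subseteq\A_1(T)$ as required.

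The main obstacle is \eqref{plan-tv}. Since $B[n]\subset S$, each $h_y$ is discrete-harmonic on $B[n]$, and \eqref{plan-tv} is a consequence of the following quantitative discrete mean value estimate: for every nonnegative $h$ that is discrete-harmonic on $B[n]$,
\ben
\left|\frac{1}{|B[n]|}\sum_{x\in B[n]}h(x)-h(0)\right| \,\le\, \frac{C}{n}\,h(0).
\een
The continuous analogue is the exact mean value theorem (error zero); the discrete $O(1/n)$ error is obtainable from classical $\bbZ^d$ potential theory, for instance by comparing the two harmonic measures on $\partial B[n]$ (started at $0$, and started from a uniform point in $B[n]$) through the discrete Poisson kernel, or alternatively by a Taylor-type expansion of $h$ at $0$ in which the first-order term vanishes by the symmetry $\sum_{x\in B[n]}x=0$ and the higher-order terms are controlled using Harnack's inequality on $B[n]$. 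This is a purely technical computation in discrete potential theory, but conceptually the heart of the lemma.
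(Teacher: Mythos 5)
Your reduction is exactly the paper's: condition on the uniform start $X$ landing in $B[n]$, then compare the averaged exit law $\bar\nu(y)=\tfrac{1}{|B[n]|}\sum_{x\in B[n]}h_y(x)$ with the exit law from the origin $\nu_0(y)=h_y(0)$, and finish with a maximal coupling. The gap is the estimate $\|\bar\nu-\nu_0\|_{\mathrm{TV}}\le C/n$, which you correctly call the heart of the lemma but do not actually prove. The paper proves it by invoking the Levine--Peres shape theorem for the divisible sandpile: if $m$ is the final mass distribution when mass $M=|B[n]|$ is placed at the origin, then $Mh(0)=\sum_{x}m(x)h(x)$ exactly for $h$ harmonic where $m$ is supported; since $0\le m\le 1$, $m\equiv1$ on $B[n-c]$ and $m\equiv0$ off $B[n+c]$, replacing $m$ by the indicator of $B[n]$ leaves an error supported in an $O(1)$-thick annulus, and summing over $y\in\partial S$ using $\sum_y h_y(x)=1$ gives the $C/n$ total-variation bound.

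Neither of your two sketched routes closes this gap as stated. Harnack is an interior estimate: it gives $h_y(x)\asymp h_y(0)$ only for $x$ well inside $B[n]$ and says nothing for $x$ within $o(n)$ of $\partial B[n]$. But when $S=B[n]$ the discrepancy comes precisely from such boundary $x$, where $h_y(x)$ can be of order $1$ while $h_y(0)\asymp n^{1-d}$, so the symmetry cancellation must be pushed all the way out to $\partial B[n]$, which Harnack cannot do. (You also assert the pointwise bound $|\bar\nu(y)-\nu_0(y)|\le (C/n)\,h_y(0)$, which is stronger than the total-variation bound the paper needs and harder to verify near $\partial B[n]$.) The discrete-Poisson-kernel route would require precise boundary asymptotics for the discrete kernel and essentially amounts to reproving the Levine--Peres input. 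Citing \cite{levine2009strong}, as the paper does, is the clean way to supply the missing estimate.
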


\begin{proof}
First, remark that our new point falls inside $B[n]$ with probability $\frac{|B[n]|}{|T|}$.
Once more, we consider the stopping time $\tau_{A}=\inf \left\{ t: \xi^x(t) \notin A \right\}$ and the function
$$h_y(x) = \prob_x \left(\xi^{x}(\tau_{A}) = y \right). $$
This function is harmonic in $x$ on $B[n]$. We are now interested in an averaging property for this harmonic function; namely, is $h_y(0)$ close to $\frac{1}{|B[n]|} \sum_{x \in B[n]} h_y(x)$ ?

The study on this averaging property is linked to that of \emph{quadrature domains} and the divisible sandpile model, and, in particular, one shape on which a relation is known between the two terms is the shape taken by the divisible sandpile after toplings, with all the initial mass started at the origin, as defined in \cite{levine2009strong}. Let $m(x)$ be the final mass distribution corresponding to an initial mass $M$ at the origin, then we have, for all harmonic functions $h$,
$$ M h(0) = \sum_{x \in \bbZ^d} m(x) h(x).$$
Recall that the final mass distribution $m$ is equal to $1$ on a given shape, has value between $0$ and $1$ at distance one from this shape, and is zero at distance more than one of this shape.  It is hence a consequence of Levine and Peres's shape theorem (see \cite{levine2009strong}) that there is a constant $c>0$ depending only on the dimension such that
$$ h_y(0) = \frac{1}{|B[n]|} \sum_{x \in \bbZ^d} m(x) h_y(x), $$
with $m(x)=1$ on $B[n-c]$ and $m(x)=0$ outside $B[n+c]$. Combining the facts that $m$ has values between $0$ and $1$ everywhere; and that $\sum_{y} h_y(x)=1 $, allows to bound the error given by replacing $m$ with $\mathbbm{1}_B$. We get that there is a constant $C_\cref{C:A dom DA}$ depending only on the dimension, such that:
$$ \sum_{y \in \partial A} \Big| h_y(0) - \frac{1}{|B[n]|} \sum_{x \in B[n]} h_y(x) \Big| \leq \frac{C_\cref{C:A dom DA}}{n}. $$

Hence, our two particles can be coupled with probability $\frac{|B[n]|}{|A|}(1-\frac{C_2}{n})$, which yields the result.\end{proof}


Assume $E$ is some subset of our aggregate $F$. As $F$ evolves, there is a natural increasing subset $E_n\subset \A_n(F)$ which corresponds to $E$ and is in fact a time change of an \rIDLA started from $E$. Basically, one traces only particles which started on $E_n$ and follows them only until they exit $E_n$. Further, it is not necessary to know anything about the structure of $F$, it is enough to know its size. Formally, the definition is as follows: Let $E_0=E$. Next, for every $n$ define
$$E_{n+1}:=\begin{cases}\Add[\xi^{X_n}_{n+1},E_n]& \text{with (independent) probability $\frac{|E_n|}{|F|+n}$ } \\
E_n&\text{otherwise}\end{cases}$$
where $X_n$ is a point chosen uniformly on $E_n$, and $\xi^{X_n}_{n+1}$ is a random walk starting at $X_n$ and independent of $\xi^{X_1}_1,\ldots,\xi^{X_{k-1}}_k$. Finally, the Bernoulli events which determine whether the point will be added or not are independent of the walks (and of one another). We see that the process depends only on the size of $F$ and not on its structure. This leads to the following definition
\begin{definition*}
  For $E\subset \bbZ^d$ and $m\ge |E|$ we let $\A_n(E;m)$ be the $E_n$ defined in the previous paragraph, for some $F$ with $|F|=m$. We call $\A_n(E;m)$ the subset \rIDLA.\label{page:censored}
\end{definition*}

Clearly $\A_n(E;|E|)$ is the same as $\A_n(E)$ and, in general, if $E\subset F$, then $\A_n(F)$ stochastically dominates $A_n (E;|F|)$. A little more than that is, in fact, true:

\begin{lemma}\label{crucial lemma}
For any sets $E\subseteq F$, we have that $\A_n(F)$ stochastically dominates $\DA_{ F \setminus E }\big(\A_n(E;|F|)\big)$.
\end{lemma}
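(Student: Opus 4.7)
The plan is to construct a single coupling of the three processes $\A_n(F)$, $E_n:=\A_n(E;|F|)$, and $\DA_{F\setminus E}(E_n)$ and to show by induction on $n$ that $\DA_{F\setminus E}(E_n)\subseteq\A_n(F)$ holds pointwise. The key tool will be the strong abelian property of standard \IDLA (Diaconis--Fulton), which asserts that for any fixed collection of random-walk trajectories, the aggregate obtained from firing one IDLA particle at each specified starting point does not depend on the order in which the particles are fired.

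I build $\A_n(F)$ using walks $\xi_0,\xi_1,\ldots$ with $\xi_k$ starting at a uniform point $X_k\in\A_k(F)$, and I extract $E_n$ by the natural coupling described just before the statement: at each step $k$ where $X_k\in E_k$, the first exit of $\xi_k$ from $E_k$ is appended to $E_k$. As noted in the paper this produces $E_n$ with the correct marginal $\A_n(E;|F|)$ and satisfies $E_n\subseteq\A_n(F)$. For the IDLA from $F\setminus E$ on $E_n$, I take independent simple random walks $(\eta_x)_{x\in F\setminus E}$, independent of $(\xi_k)$. The base case $n=0$ is trivial since $\A_0(F)=F=\DA_{F\setminus E}(E)$. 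For the inductive step, an external step at time $n$ leaves $E_n$ unchanged so the inclusion persists; otherwise the step is internal, $E_{n+1}=E_n\cup\{Y_n\}$ where $Y_n$ is the first exit of $\xi_n$ from $E_n$, and $\A_{n+1}(F)=\A_n(F)\cup\{Z_n\}$ where $Z_n$ is the first exit of $\xi_n$ from $\A_n(F)$. By the strong Markov property the continuation of $\xi_n$ after it hits $Y_n$ is an independent simple random walk from $Y_n$, which I call $\eta_{Y_n}$. Applying the strong abelian property to $(\eta_x)\cup\{\eta_{Y_n}\}$ then yields the pointwise identity
\ben
\DA_{F\setminus E}(E_{n+1})=\DA_{\{Y_n\}}\bigl(\DA_{F\setminus E}(E_n)\bigr),
\een
where on the right-hand side $\eta_{Y_n}$ is the walk for the extra $Y_n$-particle, while on the left-hand side $\eta_{Y_n}$ is trivially unused because $Y_n\notin E_n$.

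It then suffices to check that the extra point $Y_n^{\ast}$ produced on the right-hand side lies in $\A_{n+1}(F)=\A_n(F)\cup\{Z_n\}$. If $Y_n\notin\DA_{F\setminus E}(E_n)$ the step adds $Y_n^{\ast}=Y_n$, and $Y_n\in\A_{n+1}(F)$ because either $Y_n\in\A_n(F)$ directly, or else $Y_n\notin\A_n(F)$ and necessarily $Y_n=Z_n$ (the walk exits $E_n$ at a point already outside $\A_n(F)$). If instead $Y_n\in\DA_{F\setminus E}(E_n)$, then $Y_n\in\A_n(F)$ by the inductive hypothesis, and $\eta_{Y_n}$ stays inside $\A_n(F)$ until its first exit, which occurs at $Z_n$. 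Since $\DA_{F\setminus E}(E_n)\subseteq\A_n(F)$, the walk exits the smaller set no later than the larger one, and at the moment of this exit the walker is either still in $\A_n(F)$ (giving $Y_n^{\ast}\in\A_n(F)$) or exactly at $Z_n$ (giving $Y_n^{\ast}=Z_n$). In both sub-cases $Y_n^{\ast}\in\A_{n+1}(F)$, closing the induction.

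The only delicate step is verifying that the independence structure actually supports the strong abelianness argument: the walk $\eta_{Y_n}$, obtained from $\xi_n$ by strong Markov, is a simple random walk from $Y_n$ conditionally independent of $E_n$, of $\DA_{F\setminus E}(E_n)$, and of the walks $(\eta_x)$, which is what legitimizes its use as the trajectory of the extra $Y_n$-particle in the abelian rearrangement. Once this is granted, all the remaining content is the elementary geometric observation that a walk confined to $\A_n(F)$ which exits a subset of $\A_n(F)$ exits either into $\A_n(F)$ itself or precisely at the distinguished exit point $Z_n$.
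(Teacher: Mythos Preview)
Your argument has a genuine gap: the ``strong abelian property'' you invoke --- that for a fixed family of \emph{per-particle} trajectories the IDLA aggregate is independent of the firing order --- is not true. The pathwise Diaconis--Fulton abelian property holds when the randomness is encoded as stacks of instructions at each \emph{site} (so that every particle visiting $x$ reads the next instruction from the same stack); with walks attached to particles it holds only in distribution. A one-dimensional example already breaks it: with base set $\{0\}$ and two particles at $1$ carrying trajectories $(1,0,-1,\dots)$ and $(1,2,3,\dots)$, one firing order gives $\{0,1,2\}$ and the other gives $\{-1,0,1\}$. In your setting nothing prevents $Y_n$ from coinciding with some $x\in F\setminus E$ (or, more generally, the $Y_n$-particle from interacting with the $(\eta_x)$-particles at shared sites), so your pointwise identity
\[
\DA_{F\setminus E}(E_{n+1})\;=\;\DA_{\{Y_n\}}\bigl(\DA_{F\setminus E}(E_n)\bigr)
\]
need not hold for the specific $\eta_{Y_n}$ you chose, and the induction fails. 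Your closing paragraph about the conditional independence of $\eta_{Y_n}$ cannot repair this: the identity you need is a deterministic one about fixed trajectories, and independence is irrelevant to it.

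The paper's proof avoids this by reversing which half of the argument is pathwise and which is distributional. It builds a single three-colouring of $\A_n(F)$ (blue $=E_n$, red, black) with a wake-up rule: a blue particle landing on a red site turns that site blue and the displaced red particle continues along the \emph{same} walk until it leaves blue$\cup$red, possibly waking a black particle in turn. In this coupling the containment blue$\cup$red $\subseteq \A_n(F)$ holds pathwise by construction, since every colour lives inside $\A_n(F)$. The abelian property is then invoked only \emph{distributionally}, via a label-swapping trick, to identify the law of blue$\cup$red with that of $\DA_{F\setminus E}(\A_n(E;|F|))$. Your route --- fix independent $(\eta_x)$ so that the marginal of $S_n$ is correct by definition, and then try to prove $S_n\subseteq\A_n(F)$ pathwise --- asks for a pathwise abelian statement that is simply unavailable with per-particle trajectories.
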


\begin{proof}
We will colour $\A_n(F)$ in 3 colours, blue, red and black, such that the blue part has the same distribution as $\A_n(E;|F|)$, the union of the red and the blue has the same disribution as $\DA_{F\setminus E}(\A_n(E;|F|))$ and black is the rest. Here is the colouring scheme:

We start the process with $A_0=F$ coloured as follows: $E$ is coloured blue and $F\setminus E$ is coloured red. Suppose we already constructed (and coloured) $A_n$. We choose a vertex $x$ of $A_n$ randomly to start the random walk from. 
\begin{itemize}
\item If $x$ is blue, perform the random walk until the particle exits the blue set. When it does, the site where it lands is coloured blue. If there was already a particle at that site, ``wake it up'' --- it continues walking according to the rules in the following clauses.
\item Now assume we have a red particle walking (which can only happen if a red particle was woken by a blue one, as in the previous clause). Perform the random walk until the particle exits the union of the red and the blue. When it does, that site will be coloured red. If there is a black particle there, wake it up and let it continue walking according to the rule in the next (and last) clause.
\item If $x$ is red or black, let the new particle be black. Let it perform simple random walk until the it exits the entire aggregate, and colour that site black.
\end{itemize}
Thus, for example, a particle might start from a blue site, walk until reaching a red site, change that site to blue, continue walking until reaching a black site, change that site to red, and then walk until exiting. This ends the description of the colouring.

Now, the fact that the blue part of the aggregate has the same distribution as $\A_n(E;|F|)$ is evident. The fact that the union of the red and the blue has the same distribution as $\DA_{F\setminus E}(\A_n(E;|F|))$ is also simple, because the red part starts with $F\setminus E$ and then each red particle does a random walk and ends outside the eventual blue part.

One might claim that, even though each red particle does simple random walk, they are stopped and woken up mixing up their order. It is well-known that this does not affect the distribution of the final aggregate. For the convenience of the reader, let us recall the argument. One attaches labels to each red particle, and when a particle with a lower label steps over a particle with a higher label, they exchange labels so that the higher label continue to walk. This, of course, does not change the red part, but now each label does simple random walk until its final resting point, and only then does the next label start to walk. So the union of the blue and the red part has indeed the same distribution as $\DA_{F\setminus E}(\A_n(E;|F|))$ and the lemma is proved.
\end{proof}

The following lemma is extracted from \cite{DLYY11}. It states that a random walk has a small probability of passing through an area of small density, and will be used to couple our process with a First Passage Percolation process. Rather than refer to the proof of \cite{DLYY11} which holds in a more general setting, we give a shorter proof specific to $\mathbb{Z}^d$. Recall that we defined $b_n=|B[n]|$ the volume of the Euclidean ball of radius $n$ intersected with $\mathbb{Z}^d$.

\begin{lemma}\label{lem:exit prob}
Let $p>0$. There exists $\ep>0$ such that for any $n,m\ge 1$ large enough,  
$$\prob_x\Big(\xi\text{ \rm exits }S\cup  B[m]\text{ \rm through }\partial  B[m+n]\Big) \le p$$
uniformly in $x\in  B[m]$ and $S \subseteq  B[m+n]$ satisfying $|S|\le \ep b_n$.\end{lemma}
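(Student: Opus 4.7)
The plan is to reformulate the event as "the walker reaches $\partial B[m+n]$ before hitting the `forbidden' set $W:=B[m+n]\setminus(S\cup B[m])$". Setting $T:=S\cup B[m]$ and $\tau_A:=\inf\{t\ge 0:\xi(t)\in A\}$, the event in the lemma equals $\{\tau_{\partial B[m+n]}<\tau_W\}$. Since $|S|\le\varepsilon b_n$ while the annular volume $b_{m+n}-b_m$ is bounded below by a dimensional constant times $b_n$ (checked directly from $|B[r]|\asymp r^d$), the set $W$ fills most of the annulus, and the intuition is that the walker should hit $W$ before it can escape $B[m+n]$.

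The strategy I would pursue is a shell decomposition of $B[m+n]\setminus B[m]$ into concentric shells $A_k:=B[r_k]\setminus B[r_{k-1}]$ at an appropriate macroscopic scale, with $r_k-r_{k-1}$ comparable to $r_{k-1}$ (geometric spacing when $n\gtrsim m$, essentially linear when $n\ll m$). Writing $\tau_k$ for the first exit time from $B[r_{k-1}]$ and $y_k:=\xi(\tau_k)\in A_k$, the target event forces $y_k\in S\cap A_k$ for every $k=1,\dots,K$, because otherwise $y_k\in W$ and the walker has already left $T$. Iterating the strong Markov property at the times $\tau_k$ reduces the bound to a product of single-shell crossing probabilities of the form $\prob_{y}(\xi(\tau_{k+1})\in S\cap A_{k+1})$ for $y\in A_k$. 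I would control each such factor by Harnack's inequality applied to the discrete Poisson kernel of $B[r_k]$: when $y$ sits at macroscopic distance from $\partial B[r_k]$ (guaranteed by the geometric thickness), the exit distribution is bounded pointwise by a dimensional multiple of the uniform distribution on $\partial B[r_k]$, yielding
\begin{equation*}
\prob_y\bigl(\xi(\tau_{k+1})\in S\cap A_{k+1}\bigr)\le C_d\,\frac{|S\cap A_{k+1}|}{|\partial B[r_k]|}.
\end{equation*}
Combined with the constraint $\sum_k|S\cap A_k|\le\varepsilon b_n$ and AM--GM, this product should give a bound on the probability of the form $f(\varepsilon)$ that tends to $0$ as $\varepsilon\to 0$.

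The main obstacle is making the Harnack estimate uniform and translating it into the required bound: Harnack's inequality degenerates for starting points near the boundary of the relevant ball, which is why unit-thickness shells fail (the walker could arrive at $y_k$ only a single step away from $\partial B[r_k]$, concentrating the exit distribution on a single neighbor). The geometric spacing circumvents this but limits the number of shells one can fit, requiring separate bookkeeping in the regimes $m\gtrsim n$ and $m\ll n$. In the first regime the annulus is much larger than $b_n$ and $S$ is automatically very sparse per shell, while in the second many geometric shells are available but of volume comparable to $b_n$; in both cases the AM--GM computation should close the argument provided $\varepsilon$ is chosen small enough in terms of $p$.
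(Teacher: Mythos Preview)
Your reformulation and the Markov iteration over successive sphere-crossings are sound, but the AM--GM step does not close. The difficulty is a mismatch between the constraint $\sum_k|S\cap\partial B[r_k]|\le\varepsilon b_n\asymp\varepsilon n^d$ and the denominators $|\partial B[r_k]|\asymp r_k^{d-1}$. With geometric spacing $r_k\sim 2^k$ (and $m$ small) the sum $\sum_k r_k^{d-1}$ is only of order $n^{d-1}$, being dominated by the last term; so pigeonhole or AM--GM on the $s_k=|S\cap\partial B[r_k]|$ only produces a factor of size $\lesssim\varepsilon n$, which is useless. Concretely, with $K\sim\log n$ shells the AM--GM bound on the product is $C^K(\varepsilon n^d/K)^K/\prod_k r_k^{d-1}$, and the numerator, of order $n^{dK}$, overwhelms the denominator $2^{(d-1)K(K+1)/2}$. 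The linear-spacing regime you propose for $n\ll m$ has the same defect: from a point at distance $\delta$ from $\partial B[r_k]$ the Poisson-kernel density is only $\lesssim\delta^{1-d}$, not $r_k^{1-d}$, and the resulting factors again do not multiply to something controlled by $\varepsilon$ alone.

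The paper's proof avoids this by recentring. One applies the Markov property once, at the first time the walk lies in the middle third $A[m+n/3,m+2n/3]$; shifting that point to $0$ reduces the task to bounding $\prob_0(\xi\text{ stays in }S\text{ until exiting }B[n])$ for $|S|\le\varepsilon b_n$. Now the walker is at the \emph{centre}, so the exit density from every sphere $\partial B[r]$, $1\le r\le n$, is uniformly bounded by $Cr^{1-d}$. Crucially, one now has all $n$ radii available, and $\sum_{r\le n}r^{d-1}\asymp n^d\asymp b_n$; a single pigeonhole therefore gives some $r$ with $|S\cap\partial B[r]|\le C\varepsilon r^{d-1}$, whence the probability of exiting $B[r]$ inside $S$ is at most $C\varepsilon$. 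That one sphere already blocks the walk with probability $1-C\varepsilon$, and no product is needed. The idea you are missing is this recentring: it converts the problem into one where all unit-scale radii are usable, so the volume constraint matches the pigeonhole exactly.
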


\begin{proof}
By Markov's property, it is sufficient to bound $$\prob_y\left(\xi\text{ \rm exits } S \cap B_y\left[n/3\right] \text{ \rm through } \partial B_y\left[n/3\right] \right) $$ for starting points $y \in S\cap B[m+\frac {2n}3]\setminus B[m+\frac n3]$. Similarly, by shifting $y$ to zero and replacing $n/3$ by $n$, it is enough to prove that
$$\prob_0\left(\xi\text{ \rm exits } S \cap B[n] \text{ \rm through } \partial B[n]\right) \leq p$$
uniformly in any set $S\subset  B[n]$ such that $|S|\le \ep b_n$, for $\ep$ small enough (thus our new $\ep$ is multiplied by $3^d$, which does not affect the rest of the proof). 

Now, if $|S|\le\ep b_n$ then for some $r\le n$ we must have that $S\cap\partial B[r]\le C\ep r^{d-1}$. 
By \cite[Lemma 6.3.7]{LL10}, every $x\in\partial B[r]$ has probability $\le Cr^{1-d}$ that random walk started from $0$ will exit $B[r]$ at $x$. Summing over $x\in S\cap \partial B[r]$ gets that the probability that random walk started from $0$ will exit $B[r]$ at $S$ is less than $C\ep$. Thus for $\epsilon$ small enough, $\xi$ exits $S$ before reaching $\partial B[n]$ with probability greater than $1-p$.
\end{proof}


\section{Stability of the Euclidean ball}\label{sec:stability}

\subsection{Inner stability of the ball}\label{subsec:inner}


In this section (\S \ref{sec:stability}) we show that, if you start a \rIDLA from a large ball, it remains an approximate ball, with high probability. We first (\S\ref{subsec:inner}) show inner stability, i.e.\ that the aggregate \emph{contains} a ball of the approximately correct size. In a formula,
\[
\A_{b_m-b_n}(B[n])\supseteq B[m(1-Cn^{-1/4})] 
\]
with high probability. In other words, the only error is the missing $Cn^{-1/4}$ in the diameter. 

It will be convenient, though, to formulate the claim slightly more generally: if $B[n]\subseteq S$ then $\A_M(S)$ contains a ball of the correct size. We will use the notation $\A_M(E;N)$ introduced on page  \pageref{page:censored} --- recall that $\A_M(E;N)$ is the way $E$ evolves when you embed it in some set of size $N$, add $M$ particles in a \rIDLA fashion to that set, and examine only particles that landed on $E$. We first formulate a lemma for adding a relatively small number of particles, an $n^{-1/2}$ proportion:


\begin{lemma}\label{not loosing too much}
There exist $\delta_2,C_\newC{C:innerstability}>0$ such that for any $M\ge b_n$,
$$
\bbP\Big(B\big[n\big(1+n^{-1/2}-C_\cref{C:innerstability}n^{-3/4}\big)^{1/d}\big]
                \subseteq \A_{Mn^{-1/2}}(B[n];M)\Big)
\ge 1-C\exp(-n^{\delta_2}).$$
Remark in particular that the probability does not depend on $M$.
\end{lemma}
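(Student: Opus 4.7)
The plan is to couple the subset \rIDLA process $\A_{Mn^{-1/2}}(B[n]; M)$ with a standard \IDLA from the origin started at $B[n]$, and then apply the classical inner shape theorem for \IDLA. At each step $t$, the subset \rIDLA appends a uniformly-chosen particle with probability $|E_t|/(M+t)$; conditional on such an append, Lemma~\ref{stochastic domination} shows that the outcome stochastically dominates an \IDLA step launched from $0$ and added to the fixed set $B[n]$ with probability $b_n/|E_t|\cdot(1-C/n)$. Combining, each step independently attaches an \IDLA-from-origin particle to $B[n]$ with probability at least $p_t \geq (b_n/(M+t))(1-C/n) \geq (b_n/M)(1-Cn^{-1/2})$, uniformly for $t\leq Mn^{-1/2}$. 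Iterating yields a coupling under which
\[
\A_{Mn^{-1/2}}(B[n]; M) \supseteq \DA_\Delta(B[n]),
\]
where $\Delta$ stochastically dominates $\Bin\bigl(Mn^{-1/2},(b_n/M)(1-Cn^{-1/2})\bigr)$, independently of the walks used to build $\DA_\Delta(B[n])$.

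Next I would check that $\Delta$ is close to its mean. We have $\bbE[\Delta] \geq b_n n^{-1/2}(1-Cn^{-1/2})$ and $\mathrm{Var}(\Delta) \leq b_n n^{-1/2}$, so a Chernoff bound gives
\[
\prob\bigl(\Delta \leq b_n n^{-1/2} - b_n n^{-3/4}\bigr) \leq \exp(-c b_n n^{-1}) = \exp(-cn^{d-1}),
\]
which is at most $\exp(-n^{\delta_2})$ for any $\delta_2<1$, since $d\geq 2$. Thus with the required probability, $\Delta \geq b_n\bigl(n^{-1/2} - Cn^{-3/4}\bigr)$.

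It remains to show that $\DA_\Delta(B[n])$ contains the advertised ball with high probability. Here I would use two standard properties of \IDLA from the origin: the abelian property $\DA_{b_{n'}+\Delta}(\emptyset) \stackrel{d}{=} \DA_\Delta(\DA_{b_{n'}}(\emptyset))$, and monotonicity in the initial set, $S \subseteq S' \Rightarrow \DA_k(S)\subseteq \DA_k(S')$ under the natural step-by-step coupling of the particle walks (this follows from $t_S \leq t_{S'}$ at each step). Choosing $n' = n-C\log n$, the classical \IDLA outer shape theorem (Jerison--Levine--Sheffield) gives $\DA_{b_{n'}}(\emptyset) \subseteq B[n]$ with probability at least $1-\exp(-n^c)$. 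On that event, monotonicity and abelianness yield
\[
\DA_\Delta(B[n]) \supseteq \DA_\Delta(\DA_{b_{n'}}(\emptyset)) \stackrel{d}{=} \DA_{b_{n'}+\Delta}(\emptyset) \supseteq B[m'-C\log m']
\]
by the classical \IDLA inner bound, where $b_{m'} = b_{n'} + \Delta$. A short computation using $\log m'\ll n^{1/4}$ shows $m'-C\log m' \geq n(1+n^{-1/2}-Cn^{-3/4})^{1/d}$ after absorbing all errors into a single constant.

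The hardest ingredient is this final step: the target deficit in radius is only $O(n^{1/4})$, which rules out invoking the original Lawler--Bramson--Griffeath $n^{1/3}$ error bound but is comfortably accommodated by the Jerison--Levine--Sheffield logarithmic estimates. Alternatively, one could prove the inner bound for $\DA_\Delta(B[n])$ directly, since the harmonic-measure arguments in the classical \IDLA shape proofs carry over with only cosmetic changes when the initial set is $B[n]$ rather than $\emptyset$, as long as the initial set is contained in the expected aggregate.
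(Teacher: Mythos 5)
Your argument is correct and follows essentially the same route as the paper's: couple the subset \rIDLA to a standard \IDLA from the origin by iterating Lemma~\ref{stochastic domination}, control the resulting binomial number of coupled particles by a Chernoff bound, and finish with the classical \IDLA inner shape theorem. The only (minor, beneficial) deviations are that the paper first passes through an intermediate random variable $K$ (the number of particles landing on $E$) and then thins a second time, whereas you fold the two Bernoulli factors into a single per-step coin of parameter $\tfrac{b_n}{M+t}(1-C/n)$; and you spell out the abelianness--plus--monotonicity argument needed to transfer the inner shape bound from $\DA_L(\emptyset)$ to $\DA_L(B[n])$ (choosing $n'=n-C\log n$ and using the outer bound), a step the paper asserts as a direct consequence of the cited works without detailing.
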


\begin{proof}
The definition of $\A_{Mn^{-1/2}}(B[n];M)$ gives that it is the same as $\A_K(B[n])$ where $K$ is a random variable which stochastically dominates a binomial distribution with $Mn^{-1/2}$ trials and probability $b_n/(M+Mn^{-1/2})$ for success. 

Recall that Lemma~\ref{stochastic domination} says that adding a single particle to \rIDLA stochastically dominates adding a single particle to standard \IDLA, with an appropriate probability. Applying Lemma~\ref{stochastic domination} $K$ times 
gives that $\A_{K}(B[n])$ stochastically dominates a standard \IDLA with initial set $B[n]$ and with a random number $L$ of particles (started at the origin), where $L$ stochastically dominates a binomial distribution with $K$ trials and probability $(b_n/(b_n+K))(1-C_ \cref{C:A dom DA}/n)$ for success.


As a first step we need to make sure that $K$ is not too large, so that the factor $(b_n/(b_n+K))$ does not impact the probability for success too much. Since the expected value of $K$ is $n^{-1/2}b_n/(1+n^{-1/2})$, and $b_n=\omega_d n^d + O(n^{d-1})$, we know that $K$ should only be of order $n^{d-1/2}$. We use a Chernoff bound to ensure that:
$$ K \leq \frac{2n^{-1/2}b_n}{(1+n^{-1/2})}$$ with probability larger than $1-C\exp(-n^{d-1/2}/3)$ (note that $d\geq 2$). We now assume that this bound for $K$ is verified, and therefore $b_n/(b_n+K) \geq 1/(1+2n^{-1/2})$.

Combining these facts shows that $\A_{Mn^{-1/2}}(B[n];M)$ stochastically dominates standard \IDLA started from a ball with the number of particles $L$ following a binomial distribution $\Bin(s,p)$  with
$$s=Mn^{-1/2}\quad\text{and}\quad p= \frac{b_n}{M(1+n^{-1/2})(1+2n^{-1/2})}\left(1-\frac{C_\cref{C:A dom DA}}{n}\right).$$
Since $b_n=\omega_d n^d + O(n^{d-1})$, another Chernoff bound directly yields that $L$ satisfies:
$$ \left| L - \frac{n^{-1/2}b_n}{(1+n^{-1/2})(1+2n^{-1/2})}\left(1-\frac{C_\cref{C:A dom DA}}{n}\right) \right|\leq n^{d-3/4}$$ with probability larger than $1-\exp(-Cn^{d-1})$, for a constant $C>0$ (note that $d\geq 2$). We now assume that this bound for $L$ is verified.

We then apply Lemma \ref{st-exp-bound} with $B[n]$ already occupied and $L$ new particles started at the origin. With the notations of the lemma, $r_{n,N} \sim n^{-1/2}$ and we get that the ball of radius $n(1+r-r^{3/2})^{1/d}$ is included in the cluster $\DA_{L}(B[n])$ with probability at least $1-\exp(-C_{\cref{C:st-xp-bound}}n^{1/4})$. We estimate \newcnotext{c:3} 
\begin{align*}1+r-r^{3/2}
&\ge 1 + \frac{n^{-1/2}}{(1+n^{-1/2})(1+2n^{-1/2})}\left(1-\frac{C_\cref{C:A dom DA}}{n}\right) -\frac{n^{d-3/4}}{b_n} -Cn^{-3/4} \\
&\ge 1 + n^{-1/2}-C_\cref{C:innerstability}n^{-3/4}
\end{align*}
where the first inequality is the lower bound on $L$, and these inequalities hold with probability $1-C\exp(-n^{\delta_2}).$  \newcnotext{c:4}\newcnotext{c:5}\newcnotext{c:6}
\end{proof}

The case where the number of particles we add is proportional to the volume (or more) is a corollary:

\begin{corollary}\label{lose some}
There exist $\delta_3,C_\newC{C:inner2}>0$ such that for any $M\ge b_n$, 
\begin{multline*}
\bbP\Big(\forall a\ge 1\; B\big[ n  a^{1/d} \big(1-C_\cref{C:inner2}n^{-1/4} \big) \big]\subseteq \A_{(a-1)M}(B[n];M)\Big)\\
\ge 1-C\exp(-n^{\delta_3}).
\end{multline*}
\end{corollary}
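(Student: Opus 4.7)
The strategy is to iterate Lemma \ref{not loosing too much}. Define sequences $(n_k)_{k\ge 0}$ and $(M_k)_{k\ge 0}$ by $n_0=n$, $M_0=M$, and
\[
n_{k+1}=n_k\bigl(1+n_k^{-1/2}-C_\cref{C:innerstability}n_k^{-3/4}\bigr)^{1/d},\qquad M_{k+1}=M_k(1+n_k^{-1/2}).
\]
I would prove by induction on $k$ that, with high probability, by the time the outer aggregate reaches size $M_k$, the tracked subset contains $B[n_k]$. At each step one applies Lemma \ref{not loosing too much} to the pair $(B[n_k],M_k)$; the Markov property of the outer process together with the monotonicity recalled just before Lemma \ref{crucial lemma} guarantees that the genuine evolution of the tracked subset stochastically dominates the idealized one from Lemma \ref{not loosing too much}. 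The hypothesis $M_k\ge b_{n_k}$ propagates along the iteration, since the recursion yields $M_{k+1}/M_k=1+n_k^{-1/2}\ge b_{n_{k+1}}/b_{n_k}$ up to lower-order corrections.

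A union bound gives an overall failure probability at most $\sum_{k\ge 0}C\exp(-n_k^{\delta_2})$. An elementary induction on the recursion yields $n_k^{1/2}\ge n^{1/2}+k/(4d)$ for $n$ large, so splitting this sum around $k=n^{1/2}$ produces the bound $C\exp(-n^{\delta_3})$ for some $\delta_3>0$ depending only on $d$ and $\delta_2$.

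The heart of the argument is the deterministic estimate on the ratio $n_k^d/M_k$. The recursion yields
\[
\frac{n_{k+1}^d}{M_{k+1}}=\frac{n_k^d}{M_k}\cdot\frac{1+n_k^{-1/2}-Cn_k^{-3/4}}{1+n_k^{-1/2}},
\]
hence $n_K^d/M_K\ge(n^d/M)\prod_{k<K}(1-Cn_k^{-3/4})$. The main obstacle is to show $\sum_{k\ge 0}n_k^{-3/4}\le Cn^{-1/4}$ \emph{uniformly in the number of iterations}: an upper bound scaling as $\log a$ would violate the claimed statement, whose error term is independent of $a$. This is precisely where the quadratic growth $n_k\gtrsim(n^{1/2}+k/(4d))^2$ is essential --- it converts the sum into an integral comparable to $\int_{n^{1/2}}^{\infty}s^{-3/2}\,ds=2n^{-1/4}$, with no implicit dependence on how far the iteration has proceeded.

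Finally, for arbitrary $a\ge 1$ I would take $K$ to be the smallest index with $M_K\ge aM$, so that $a\le M_K/M\le a(1+n^{-1/2})$. Monotonicity gives $\A_{(a-1)M}(B[n];M)\supseteq B[n_{K-1}]$, and the estimate $n_{K-1}^d\ge(n^d/M)M_{K-1}(1-Cn^{-1/4})$ combined with $M_{K-1}/M\ge a(1+n^{-1/2})^{-1}$ yields $n_{K-1}\ge na^{1/d}(1-C_\cref{C:inner2}n^{-1/4})$ after absorbing the factor $(1+n^{-1/2})^{-1/d}\ge 1-Cn^{-1/2}$ into the error term.
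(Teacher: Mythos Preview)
Your proof is correct and follows the same strategy as the paper's: iterate Lemma~\ref{not loosing too much} and show that the accumulated multiplicative loss $\prod_k(1-Cn_k^{-3/4})$ stays above $1-Cn^{-1/4}$. The only difference is bookkeeping---the paper first treats $1\le a\le 2$ using at most $n^{1/2}$ iterations (so the product is $\ge(1-Cn^{-3/4})^{n^{1/2}}$) and then handles $a>2$ by restarting the argument at radii $2n,4n,\dotsc$ and summing the resulting geometric series of errors, whereas you run a single unified iteration for all $a$ and bound $\sum_{k\ge 0} n_k^{-3/4}$ directly via the quadratic growth $n_k\gtrsim(n^{1/2}+ck)^2$; both routes give the same $O(n^{-1/4})$ estimate.
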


\begin{proof}
Examine first the case that $a\leq 2$.
  We apply the previous lemma repeatedly $K$ times, i.e.\ define 
\begin{center}
\begin{tabular}{ll}
$S_0=B[n],$ & $M_0=M,$ \\
$S_{i+1}=\A_{M_in^{-1/2}}(S_i;M_i),$ & $M_{i+1}=M_i(1+n^{-1/2}),$
\end{tabular}
\end{center}
with $K$ chosen in such a way that we obtain additional $|S|$ particles. Since each time we add $M_in^{-1/2}$ particles and $M_i\ge M$, we deduce that $K\le n^{1/2}$ 
Therefore, with probability larger than 
$$1-n^{1/2}\exp(-n^{\delta_2})$$
the aggregate $\A_{(a-1)M}(B[n];M)$ contains the Euclidean ball of radius
\begin{align}n a^{1/d}\left(1-Cn^{-3/4}\right)^{K}&\ge n a^{1/d}\left(1-Cn^{-3/4}\right)^{n^{1/2}}\nonumber\\
&\ge n a^{1/d} \Big( 1-C n^{-1/4} \Big).\label{eq:radius}
\end{align}
This takes care of $a$ along a sequence. For a general $a\in[1,2]$, we find some $i$ such that $M_i<(a-1)M<M_{i+1}$ and the inequality still holds from monotonicity of the aggregate (we lose $Cn^{d-1/2}$ particles from the approximation, but this only changes the value of the constant in (\ref{eq:radius})).

For general $a$ (i.e.\ $a>2$) we repeat the last calculation for $2n$, $4n$ etc. We get that the claim holds for all $a$ except for an event whose probability is smaller than 
$$
C\sum_{i=0}^n \big(2^in\big)^{1/2}\exp(-2^in^{\delta_2}).
$$
Since this sum converges exponentially, we may bound it by $C\exp(n^{-\delta_3})$ for an appropriate $\delta_3$. Similarly, the errors in (\ref{eq:radius}) converge exponentially, so they only change the constant. So we get that the radius is bounded by
$$
na^{1/d}(1-C_\cref{C:inner2}n^{-1/4})
$$
for a suitable constant $C_\cref{C:inner2}$. 
%
\end{proof}

\subsection{Genealogical construction and outer stability}
\label{genealogy}
Our aim in this section is to prove a converse to Corollary \ref{lose some} for the outer stability of the ball. We begin by comparing the process started from any set $S \subset B[n]$ with the process started from $B[n]$, on an event of high probability. Here we are comparing \rIDLA to another \rIDLA (and not to standard \IDLA, as in the previous section), so the argument is much simpler.

\begin{lemma} \label{lem:increasing}
There exist $\delta_5>0$ and $C_\newC{C:6} >0$ such that for any set $S$ with $S \subseteq B[n]$, and for any $1\leq a \leq 2$, there is a coupling of $\A_{(a-1)|S|}(S)$ and $\A_{(a-1)b_n(1+C_\cref{C:6}|S|^{-1/4})}(B[n])$ such that
$$\bbP\Big( \A_{(a-1)|S|}(S) \subseteq \A_{(a-1)b_n(1+C_\cref{C:6}|S|^{-1/8})}(B[n]) \Big)\ge 1-C\exp(-|S|^{\delta_5}).$$
\end{lemma}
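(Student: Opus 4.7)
The plan is to realize both aggregates on a single probability space by running the $B[n]$-\rIDLA dynamics and viewing the $S$-process as the subset-\rIDLA sitting inside it. This is exactly the colouring coupling behind Lemma~\ref{crucial lemma} applied with $E=S$ and $F=B[n]$: running $\A_N(B[n])$ with $N=(a-1)b_n(1+C_\cref{C:6}|S|^{-1/8})$ outer steps produces, simultaneously, a distinguished subset $\A_N(S;b_n)\subseteq\A_N(B[n])$ together with a random counter $K_N$ recording how many of those outer steps actually added a point to the subset. By construction the successive subset additions perform the standard $S$-\rIDLA dynamics, so as random sets $\A_{K_N}(S)=\A_N(S;b_n)$ in this coupling. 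On the event $\{K_N\geq (a-1)|S|\}$ one therefore has
\[
\A_{(a-1)|S|}(S)\;\subseteq\;\A_{K_N}(S)\;=\;\A_N(S;b_n)\;\subseteq\;\A_N(B[n]),
\]
which is exactly the containment the lemma demands.

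It remains to bound $\bbP(K_N<(a-1)|S|)$. Writing $E_t$ for the subset at outer time $t$, the dynamics of $|E_t|$ are precisely those of a Pólya urn with initial composition $(|S|,\,b_n-|S|)$: at step $t$ the subset grows by one with probability $|E_t|/(b_n+t)$, which is the standard Pólya update. The rescaled mass $M_t:=|E_t|/(b_n+t)$ is therefore a martingale starting at $M_0=|S|/b_n$, and a short algebraic manipulation shows that $\{K_N\geq (a-1)|S|\}$ is precisely the event $\{M_N\geq M_0-\eta_0\}$ with $\eta_0=\Theta(|S|^{7/8}/b_n)$. The task thereby reduces to establishing lower-tail concentration of $M_N$ at this scale.

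This concentration estimate is the main obstacle. Since $|S|$ can be much smaller than $b_n$, a plain Azuma bound based only on the trivial increment estimate $1/b_n$ controls the total quadratic variation at scale $1/b_n$ and yields only $\exp(-c|S|^{7/4}/b_n)$, which is too weak. The right bound comes from the De~Finetti / Beta--Binomial representation of the Pólya urn: there exists $M_\infty\sim\mathrm{Beta}(|S|,\,b_n-|S|)$ such that, conditionally on $M_\infty$, the successive draws are i.i.d.\ Bernoulli$(M_\infty)$, hence $K_N\mid M_\infty\sim\mathrm{Bin}(N,M_\infty)$. Splitting according to whether $M_\infty\geq M_0-\eta_0/2$, the first piece $\bbP(M_\infty<M_0-\eta_0/2)$ is controlled via the standard identity $\bbP(\mathrm{Beta}(\alpha,\beta)\leq p)=\bbP(\mathrm{Bin}(\alpha+\beta-1,p)\geq \alpha)$ followed by a Chernoff bound, and the second by the usual Chernoff bound for Binomial variables. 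Both pieces come out of order $\exp(-c|S|^{3/4})$, yielding any $\delta_5<3/4$ once $C_\cref{C:6}$ is chosen large enough to absorb the universal constants.
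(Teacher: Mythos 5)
Your proof is correct, and the overall skeleton is the same as the paper's: realise both processes on a common space via the subset-\rIDLA coupling $\A_N(S;b_n)\subseteq\A_N(B[n])$, reduce the inclusion to the event $\{K_N\ge(a-1)|S|\}$, and then show that event is overwhelmingly likely. Where you diverge is in how the tail of $K_N$ is bounded. The paper proceeds by partitioning the $N$ outer steps into $\approx|S|^{1/2}$ blocks of $b_n|S|^{-1/2}$ steps each, lower-bounding the per-step addition probability within a block by $|S|/(b_n+\text{block end time})$, applying a Chernoff bound to the dominated Binomial in each block, and taking a union bound; this yields a tail of order $\exp(-c|S|^{1/9})$. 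You instead recognise that the pair (subset size, complement size) evolves exactly as a P\'olya urn with initial composition $(|S|,\,b_n-|S|)$, invoke the Beta--Binomial (De~Finetti) representation $K_N\mid M_\infty\sim\mathrm{Bin}(N,M_\infty)$ with $M_\infty\sim\mathrm{Beta}(|S|,b_n-|S|)$, and Chernoff-bound the two pieces separately using the order-statistic identity $\bbP(\mathrm{Beta}(\alpha,\beta)\le p)=\bbP(\mathrm{Bin}(\alpha+\beta-1,p)\ge\alpha)$. Your estimates are correct (I checked $\eta_0=\Theta(|S|^{7/8}/b_n)$, the Beta-tail of order $\exp(-c|S|^{3/4})$, and the conditional Binomial tail of the same order), and you correctly identify that plain Azuma with the trivial $1/b_n$ increment bound would be too weak. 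Your route is cleaner, avoids the slightly delicate bookkeeping of how the per-step probability drifts across blocks, and actually produces a sharper exponent $\delta_5$ close to $3/4$ rather than the paper's $1/9$; since the lemma only asserts existence of some $\delta_5>0$, both are fine, but your version is the tidier argument.
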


\begin{proof}
 %
  We may assume $|S|$ is sufficiently large. Recall the definition of subset \rIDLA on page \pageref{page:censored} and the natural coupling of $A_i(S;b_n)$ and $A_i(B[n])$, with the property that $A_i(S;b_n)\subseteq A_i(B[n])$. Examine first the first $b_n|S|^{-1/2}$ particles added to $\A_i(B[n])$. Each of these is added to $\A_i(S;b_n)$ with probability at least $|S|/(b_n+b_n|S|^{-1/2})$. A Chernoff bound therefore shows that
\[
\bbP\bigg(|A_{b_n|S|^{-1/2}}(S;b_n)|-|S|>\frac{|S|^{1/2}}{1+|S|^{-1/2}}-|S|^{3/8}\bigg)>1-C\exp(-c|S|^{1/8}).
\]
By repeating this procedure at most $|S|^{1/2}$ times (here we use the assumption that $a\leq 2$), we get that on an event of probability at least 
$$1 - C|S|^{1/2}\exp(-C|S|^{1/8}) \ge 1-C\exp(-|S|^{1/9}),$$ 
we have $|\A_{ (a-1)b_n(1+C_\cref{C:6}|S|^{1/8})}(S;b_n)|>(a-1)|S|$. This finishes the lemma: we construct the coupling by letting $A_i(S)=A_{j(i)}(S;b_n)$ where $j(i)$ is the first time that $|A_{j(i)}(S;b_n)|=|S|+i$ and then with probability at least $1-C\exp(-|S|^{1/9})$ we have $j((a-1)|S|)\le (a-1)b_n(1+C_\cref{C:6}|S|^{1/8})$ so
\begin{align*}
  \A_{(a-1)|S|}(S)&=\A_{j((a-1)|S|)}(S;b_n)\subseteq
  \A_{(a-1)b_n(1+C_\cref{C:6}|S|^{1/8})}(S;b_n)\\
  &\subseteq \A_{(a-1)b_n(1+C_\cref{C:6}|S|^{1/8})}(B[n]).
\end{align*}
As needed.
\end{proof}

We will now prove that the \rIDLA started from a ball is contained in a suitable ball with high probability.

\begin{proposition} \label{converse}There exist $ C_\newC{C:7} >0$ such that for any $1 \leq a \leq 2$ the event
$$\A_{(a-1)b_n}(B[n]) \subseteq B\big[na^{1/d}(1+C_\cref{C:7}n^{-1/5})\big] $$ 
occurs with superpolynomially large probability.
\end{proposition}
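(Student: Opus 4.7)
The plan is to combine the inner stability result Corollary \ref{lose some} with a genealogical tree argument, comparing the outward growth of the aggregate to first--passage percolation on a random tree.

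Write $r_0 = n a^{1/d}$ and $R = r_0(1+C_\cref{C:7}n^{-1/5d})$. By Corollary \ref{lose some} applied with this $a$, on a superpolynomially likely event $\A_{(a-1)b_n}(B[n])$ contains $B[r_0(1-Cn^{-1/4})]$. Since the total cardinality of $\A_{(a-1)b_n}(B[n])$ is exactly $ab_n = b_{r_0}(1+o(1))$, the number of aggregate vertices outside $B[r_0(1-Cn^{-1/4})]$ --- the \emph{excess} --- is at most $O(n^{d-1/4})$.

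Next I would set up the genealogical forest $\T$: for each particle $k\ge 1$, let $\pi(k)$ be the unique earlier particle whose landing site equals the start vertex $X_k$ of particle $k$; the roots of $\T$ are the particles whose start already lies in $B[n]$. If some particle $k^\star$ satisfies $Y_{k^\star}\notin B[R]$, tracing its ancestors in $\T$ produces a chain $P_1\to P_2\to\cdots\to P_\ell=k^\star$ in which $P_1$ is the most recent ancestor with $X_{P_1}\in B[r_0]$; by construction each intermediate $Y_{P_i}$ lies in $A[r_0,R]$ (it must be in $B[R]$ since $k^\star$ is chronologically the first exit, and outside $B[r_0]$ by the choice of $P_1$), while $Y_{P_\ell}\notin B[R]$. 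The concatenation of the walks $\xi_{P_1},\ldots,\xi_{P_\ell}$ therefore realises a radial first passage from $\le r_0$ up to $>R$.

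Now fix equispaced radii $r_0<r_1<\cdots<r_K=R$ with spacing $\Delta:=r_{i+1}-r_i\asymp n^{1-1/(4d)}$, so that $K\asymp n^{1/(20d)}$. The excess estimate gives $|\A\cap A[r_i,r_{i+1}]|\le O(n^{d-1/4}) \ll \epsilon\, b_\Delta$ for any fixed small $\epsilon$; Lemma \ref{lem:exit prob} then yields that any walk entering $B[r_i]$ exits through $\partial B[r_{i+1}]$ with probability at most some $p<1$, uniformly in $i$ and in the history of the process. The concatenated trajectory of the chain must cross each of the $K$ boundaries $\partial B[r_i]$, and the walks are conditionally independent given the history, so the chain event has probability at most $p^K=\exp(-cn^{1/(20d)})$. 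A union bound over the at most $b_n\asymp n^d$ candidates for $k^\star$ gives a total bound of $n^d\exp(-cn^{1/(20d)})$, which is superpolynomially small.

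The main obstacle is making the first--passage percolation step rigorous, since the genealogy $\T$ is itself random and a single walk can cross several of the boundaries $\partial B[r_i]$ in one excursion. I would resolve this by revealing particles in chronological order and applying Lemma \ref{lem:exit prob} conditionally at each step, relying on the density bound in every annulus (guaranteed throughout the process by Corollary \ref{lose some} applied at all intermediate times). When a walk crosses $j$ consecutive boundaries in a single excursion, strong Markov applied iteratively shows that this contributes a factor at most $p^j$, so the product over the $K$ boundaries is at most $p^K$ regardless of how the crossings are distributed among the walks of the chain, closing the argument.
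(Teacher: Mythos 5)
Your overall strategy (inner bound from Corollary~\ref{lose some} to control the excess, annuli of width $\asymp n^{1-1/4d}$, Lemma~\ref{lem:exit prob} to get a per-annulus crossing probability $p$, genealogical chains in the forest~$\T$) is exactly the paper's. But there is a genuine gap in the central probability estimate: the claim that ``the chain event has probability at most $p^K$ regardless of how the crossings are distributed among the walks of the chain'' is false without a bound on the \emph{length} of the chain, and you never supply one.

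Here is the issue concretely. Write the chain as $P_1\to\cdots\to P_\ell$ and let $G_i$ be the number of annuli by which walk $P_i$ advances the running maximum radius. By strong Markov and Lemma~\ref{lem:exit prob}, each $G_i$ is (conditionally) dominated by a geometric variable of parameter $1/2$, so the number of annuli crossed by the whole chain is dominated by $\sum_{i=1}^{\ell}G_i$. That sum has mean $\asymp\ell$. If $\ell$ is comparable to (or larger than) your $K\asymp n^{1/(20d)}$, then $\bbP\big(\sum_i G_i\ge K\big)$ is of constant order, nothing like $p^K$: after a walk fails to advance, the chain does not stop — $P_{i+1}$ starts at the landing site of $P_i$ (which remains inside $B[R]$) and simply ``retries''. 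Multiplying probabilities over the $K$ boundaries is only legitimate when all crossings belong to a \emph{single} walk; across different walks of a long chain the bound collapses. Since nothing a priori prevents a branch of $\T_{(a-1)b_n}(B[n])$ from having depth far exceeding $K$, your $n^d\cdot p^K$ bound is not justified.

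The missing ingredient is precisely Lemma~\ref{lem:distance}, which the paper proves via the Fayolle--Krikun estimate $\bbE[X_t(k)]=t^k/k!$ and which shows that, with superpolynomially large probability, the maximal \emph{reaching time} (the sum of the edge-geometrics along any branch) in the forest is at most $\log^2 n$. This bounds the chain length by $O(\log n)$ and, more importantly, bounds $\sum_i G_i$ directly by $\log^2 n$, so the aggregate stays inside $B\big[na^{1/d}(1+\beta_d\log^2 n\cdot n^{-1/4d})\big]$, which is well inside the target ball. Your argument would go through if you inserted this tree-depth control: condition on the event of Lemma~\ref{lem:distance}, note that every chain you consider then has reaching time $\le\log^2 n\ll K$, and only then conclude. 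As written, though, the proof has a real hole at the $p^K$ step.
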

Here and below, when we say about a sequence of events $E_n$ that they ``occur with superpolynomially large probability'' we mean that there exists a function $\phi$ decreasing to $0$ faster than any power of $n$ such that $\prob(E_n)>1-\phi(n)$. We might also use the phrase ``$\bbP(E_n)$ grows superpolynomially'' (and we do not insinuate by that the the probabilities increase as a function of $n$, just the bound above).

In order to prove this proposition, we first remark that as a consequence of Corollary~\ref{lose some}, the ball of radius $na^{1/d}(1-C_\cref{C:inner2}n^{-1/4})$ is included in the \rIDLA cluster $\A_{(a-1)b_n}(B[n])$ with stretched exponentially small probability. Hence we only need to control a number of particles of order $an^{d-1/4}$. However, these particles could in principle cover a thin spike that would reach very far. We know this cannot happen in regular IDLA, but in our case, a new particle may start on the furthermost point of the cluster, which complicates the situation. We therefore need to consider the genealogy of the particles in the process.

Recall that a {\em rooted tree} is a graph with no cycle and one marked point called the root. 
A {\em rooted forest} is a family of disjoint rooted trees. 

We construct the \rIDLA starting from a set $S$ in a new fashion. 
Consider a rooted forest whose vertices are indexed by integers and constructed as follows. At time $0$, $\T_0(S)$ is given by $|S|$ isolated sites indexed by $1,2,\dots,|S|$, which are the roots of the trees. 
At each step the vertex set of $\T_k$ is $\A_k(S)$ and the edges of $\T_k$ are constructed inductively as follows: $\T_{k+1}$ has all the edges of $\T_k$ and one more, from the starting point of the random walk which constructed $\A_{k+1}$ to its end i.e.\ to $\A_{k+1}\setminus\A_k$.
We will call this construction the \emph{genealogical construction} of the \rIDLA cluster, and $\T_k$ the \emph{genealogical tree} encoding it. We will look closely at the forest structure of $\T_k$, not at its embedding in $\mathbb{Z}^d$: rather, we think of a particle in the cluster as having both a position in $\mathbb{Z}^d$ and a position in the \emph{genealogical tree (or forest)} associated with the cluster. 

We start by an elementary lemma which is a generalisation of  \cite[Lemma 2.1]{FK}. As in First Passage Percolation, we attribute to every edge of the forest a geometric random variable with parameter $1/2$, independent of the random variables of other edges. We define the \emph{passage time} between two vertices as the sum of the random variables over edges on the geodesic between those two vertices (note that in this case, there is only one choice for the minimal path). The \emph{reaching time} of a vertex is the passage time between the root and this vertex.
\begin{lemma}\label{lem:distance}
Let $n,h>0$. Consider $\T_n(\{0\})$ constructed as above when starting from $A=\{0\}$. There exist $c,C>0$ such that for any $h\ge C\log n$, then
$$\bbP[\exists\text{ a vertex with reaching time larger than $h$}]\le e^{-ch}.$$
\end{lemma}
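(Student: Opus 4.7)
\medskip\noindent\textbf{Proof proposal.} The key structural observation is that the unweighted genealogical tree $\T_n(\{0\})$ is exactly a \emph{uniform random recursive tree} (URT): by construction, at step $k$ the new vertex is attached to a vertex chosen uniformly among the existing $v_1,\ldots,v_{k-1}$. Labelling vertices in order of appearance and writing $G_k$ for the geometric weight on the edge above $v_k$, the reaching time $R_k$ satisfies $R_1=0$ and
$$R_k = R_{\pi(k)} + G_k, \qquad \pi(k) \sim \mathrm{Unif}\{1,\ldots,k-1\},$$
with $\pi(k), G_k$ independent of everything else. My plan is to control $\max_k R_k$ by a Chernoff plus union-bound argument, with the main ingredient being an explicit recursion for the moment generating function $f_k(\theta):=\bbE[e^{\theta R_k}]$.

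\medskip\noindent Let $\phi(\theta):=\bbE[e^{\theta G}]= 1/(2-e^\theta)$ for $\theta<\log 2$. Averaging over $\pi(k)$ and $G_k$ gives
$$f_k(\theta) = \frac{\phi(\theta)}{k-1}\sum_{j=1}^{k-1} f_j(\theta).$$
Setting $S_k(\theta):=\sum_{j=1}^k f_j(\theta)$ turns this into the telescoping recursion $S_k = S_{k-1}\bigl(1+\phi(\theta)/(k-1)\bigr)$, whose solution is the Gamma-function ratio
$$S_{k-1}(\theta) = \prod_{l=1}^{k-2}\Bigl(1+\frac{\phi(\theta)}{l}\Bigr) = \frac{\Gamma(k-1+\phi(\theta))}{\Gamma(k-1)\,\Gamma(1+\phi(\theta))}.$$
Stirling gives $f_k(\theta)\le C(\theta)\,k^{\phi(\theta)-1}$ and hence $\sum_{k\le n} f_k(\theta) \le C'(\theta)\, n^{\phi(\theta)}$. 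Chernoff and the union bound then yield, for every $\theta\in(0,\log 2)$,
$$\bbP\Bigl(\max_{k\le n} R_k \ge h\Bigr) \le e^{-\theta h}\sum_{k\le n} f_k(\theta) \le C'(\theta)\, n^{\phi(\theta)}\, e^{-\theta h}.$$

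\medskip\noindent It remains to optimise $\theta$. Fixing any convenient $\theta_0\in(0,\log 2)$, for instance $\theta_0=\log(3/2)$ (for which $\phi(\theta_0)=2$), and taking $C=2\phi(\theta_0)/\theta_0$, the exponent becomes $\phi(\theta_0)\log n - \theta_0 h \le -(\theta_0/2)h$ as soon as $h\ge C\log n$. This gives the desired bound $e^{-ch}$ with $c=\theta_0/2$ (absorbing the prefactor $C'(\theta_0)$ into the constant by enlarging $C$).

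\medskip\noindent The only non-routine step is the recursive MGF computation; once the closed form via Gamma functions is in place, extracting the polynomial rate $k^{\phi(\theta)-1}$ is a standard Stirling calculation, and the rest is a one-line Chernoff bound. The subtlety I would watch for is verifying that the recursion $f_k = \phi(\theta)\,S_{k-1}/(k-1)$ really is available, i.e. that $\pi(k)$ is independent of $(R_1,\ldots,R_{k-1})$, which follows immediately from the URT construction since the parent choice at step $k$ depends only on the labels, not on the geometric weights.
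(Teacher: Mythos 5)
Your approach is correct, and it is a genuinely different route from the paper's. The paper embeds the discrete-time genealogical tree into a continuous-time Yule process $\tilde\T_t$, invokes the formula $\bbE[X_t(k)]=t^k/k!$ from \cite{FK} for the expected number of vertices at depth $k$, splits the event ``reaching time $>h$'' according to depth (bounding the probability $p_{k,h}$ that $k$ geometrics sum to more than $h-k$), sums over $k$, and finally de-Poissonises by conditioning on the size of $\tilde\T_{2\log n}$. You stay entirely in discrete time: you note that $\T_n(\{0\})$ is a uniform random recursive tree, derive the exact recursion $f_k(\theta)=\frac{\phi(\theta)}{k-1}\sum_{j<k}f_j(\theta)$ for the moment generating function of the reaching time $R_k$, solve it in closed form with Gamma functions to get $f_k(\theta)\le C(\theta)k^{\phi(\theta)-1}$, and finish with Chernoff plus a union bound. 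Both methods ultimately trade polynomial growth (in $n$) against exponential decay (in $h$), but yours is more self-contained --- no continuous-time embedding, no external reference, no conditioning on the Poissonised size --- at the cost of leaning on the recursion that is special to uniform recursive trees. The independence point you flag is indeed the step to watch and it holds: the parent indices $\pi(2),\pi(3),\ldots$ are independent of one another and of the edge weights $G_2,G_3,\ldots$, so $(R_1,\ldots,R_{k-1})$ is a function of $(\pi(2),\ldots,\pi(k-1),G_2,\ldots,G_{k-1})$ only, which is independent of $(\pi(k),G_k)$; the recursion for $f_k$ then follows by conditioning. One cosmetic remark: the paper is inconsistent about whether the edge weight is geometric with \emph{parameter} $1/2$ or with \emph{mean} $1/2$; your $\phi(\theta)=1/(2-e^\theta)$ corresponds to the former (support $\{0,1,2,\ldots\}$, success probability $1/2$, mean $1$), but the argument is insensitive to this choice: any geometric with a finite exponential moment near zero yields the same conclusion with adjusted constants $c,C$.
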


\begin{proof}

Let us first consider a slightly different model. Let $\tilde\T_t$ be the tree obtained from the same rule as for $\T_n$, but in continuous time (meaning that a new edge appears on each vertex according to an exponential clock of mean 1). Rather than explicitly writing the coupling between $\tilde\T_t$ and $\T_n$, we embed $\tilde\T_t$ in our probability space so that it is independent from $\T_n$. This model is exactly the model studied in \cite{FK}. In particular, if $X_t(k)$ is the number of vertices at graph distance $k$ from the root, Lemma 2.1 of \cite{FK} shows that
$$\bbE[X_t(k)]=\frac{t^k}{k!}.$$
Choose $C_\newC{C:11}>1$ large enough and $c_\newc{c:3.3.1}>0$ small enough so that $\bbE[X_t(k)]\le e^{-c_\cref{c:3.3.1}k}$ for $k\ge C_\cref{C:11}t$. Let $\tilde D_t(h)$ be the number of sites with passage time larger than $h$ in $\tilde \T_t$. We find
$$\bbE[\tilde D_t(h)]=\sum_{k=0}^\infty p_{k,h} \bbE[X_t(k)]=\sum_{k=0}^\infty p_{k,h}\frac{t^k}{k!},$$
where $p_{k,h}$ is the probability that the sum of $k$ independent geometric random variables of mean 1/2 is larger than $h-k$. There exists $c_\newc{c:3.3.2}>0$ such that $p_{k,h}\le e^{-c_\cref{c:3.3.2}h}$ for any $k\le h/3$. For simplicity, let us assume that $c_\cref{c:3.3.2}<2/3$.
By dividing the sum between $k\le c_\cref{c:3.3.2}h/2$ and $k\ge c_\cref{c:3.3.2}h/2$, we find that for $h\ge 2C_\cref{C:11}t/c_\cref{c:3.3.2}$,
\begin{align*}
\bbE[\tilde D_t(h)]&
=\sum_{k=0}^{c_\cref{c:3.3.2}h/2} p_{k,h}\frac{t^k}{k!}
+\sum_{k=c_\cref{c:3.3.2}h/2}^{\infty} p_{k,h}\frac{t^k}{k!}\\
&\le \sum_{k=0}^{c_\cref{c:3.3.2}h/2} p_{k,h}\frac{t^k}{k!}
+\sum_{k=c_\cref{c:3.3.2}h/2}^{\infty} \frac{t^k}{k!}\\
&\le \frac{c_\cref{c:3.3.2}h}{2}e^{c_\cref{c:3.3.2}h/2C_\cref{C:11}}e^{-c_\cref{c:3.3.2}h}
+\frac{e^{-c_\cref{c:3.3.1}c_\cref{c:3.3.2}h/2}}{1-e^{-c_\cref{c:3.3.1}}}.\end{align*}
In the second line, we used that $p_{k,h}\le 1$, and in the third line both the bound on $\frac{t^k}{k!}\le e^{-c_\cref{c:3.3.1}k}$ obtained by assuming that $k\ge c_\cref{c:3.3.2}h/2\ge C_\cref{C:11}t$, and the bounds $\frac{t^k}{k!}\le e^{t} \le e^{c_\cref{c:3.3.2}h/2C_\cref{C:11}}$ and $p_{k,h}\le e^{-c_\cref{c:3.3.2}h}$ when $k\le c_\cref{c:3.3.2}h/2$.

It only remains to go back from continuous time to discrete time. Let $D_n(h)$ be the number of sites with reaching time larger than $h$ in $\T_n$. From our construction, conditionally on the event $\{ \tilde \T_t \text{ has $k$ sites} \}$, $\tilde D_t $ has the same law as $D_k$.

\newcnotext{c:3.3.3}

Since with probability at least $1/2$, the aggregate $\tilde \T_{2\log n}$ has more than $n$ particles, we deduce that
\begin{align*}\bbP[ D_n(h)>0]&\le \bbE[D_n(h)]\le2\bbE[D_n(h)]\bbP(\tilde \T_{2\log n}\text{ has more than $n$ sites})\\
&\le 2\sum_{k\ge n}\bbE[D_k(h)]\bbP(\tilde \T_{2\log n}\text{ has $k$ sites})\\
&\le 2\sum_{k\ge n}\bbE[\tilde D_{2 \log n}(h)|\tilde \T_{2\log n}\text{ has $k$ sites}] \bbP(\tilde \T_{2\log n}\text{ has $k$ sites})\\
&\le 2\bbE[\tilde D_{2\log n}(h)]\le \exp(-c_\cref{c:3.3.3} h)
\end{align*}
for any $h\ge (4C_\cref{C:11}/c_\cref{c:3.3.2})\log n$, and $c_\cref{c:3.3.3}$ sufficiently small. 
\end{proof}

We are now ready to prove Proposition \ref{converse}. Recall that it stated that with superpolynomially large probability, $\A_{(a-1)b_n}(B[n])\subset B[a^{1/d}n(1+C_\cref{C:7}n^{-1/9d})]$.
\begin{proof}
By Corollary \ref{lose some}, we know that with stretched exponential probability, $\A_{(a-1)b_n}(B[n])$ contains $B[a^{1/d}n(1-C_\cref{C:inner2}n^{-1/4})]$. But that leaves only $C_\newC{C:unaccounted}n^{d-1/4}$ particles unaccounted for. 
Consequently, there are at most $C_\cref{C:unaccounted} n^{3d/4}$ particles outside $B[a^{1/d}n]$ at the end of the construction and therefore also at every previous step.

Recall that Lemma \ref{lem:exit prob} states that it is difficult to traverse any annulus $B[m]\setminus B[n]$ containing less than $\ep b_{m-n}$ vertices. In our setting, this means that there exists some constant $\beta_d$ such that for each of the annuli 
$$R_k=B[n+(k+1)\beta_dn^{3/4}]\setminus B[n+k\beta_dn^{3/4}],$$
the conclusion of Lemma \ref{lem:exit prob} holds in this annulus, with $p_{\textrm{Lemma \ref{lem:exit prob}}}=1/2$, if it is filled with less than $C_\cref{C:unaccounted} n^{d-{1/4}}$ particles (note that $3d/4 \leq d - 1/4$ since $d\geq 2$). Remark that $\beta_d$ is a constant that depends only on the dimension.

Since all the $R_k$'s are outside $B[n]$, each of them contains at most $C_\cref{C:unaccounted} n^{d-{1/4}}$ particles at any point in the construction of the cluster. Hence, the number of annuli that a particle can cross between its starting point and its exit point is stochastically dominated by a geometric variable of parameter $1/2$, and all these geometric variables can be taken to be independent.

The above discussion shows that a single particle may not go further than $C\log n$ annuli from its starting point. To get from this a bound on the size of the aggregate is a question about the forest $\T$. Precisely, the maximum $k$ that we consider is stochastically dominated by the maximum reaching time in the forest $\T_{(a-1)b_n}(B[n])$.

We now apply Lemma \ref{lem:distance}. Recall that it stated that for $\T_{b_n}(\{0\})$, the probability that it has a vertex with reaching time bigger than $\log(n)^2$ is smaller than $\exp\big(-c\log(n)^2\big)$. For every $x\in B[n]$, the corresponding tree in $\T_{(a-1)b_n}(B[n])$ is stochastically dominated by $\T_{b_n}(\{0\})$ (recall that $a\le 2$) so we get that, with superpolynomially large probability, the reaching time of every $x$ in every tree of $\T_{(a-1)b_n}(B[n])$ is smaller than $\log(n)^2$.

Now, the reaching time was defined using geometric random variables independent of the forest $\T_{(a-1)b_n}(B[n])$, so we can use the number of $R_k$ crossed by the corresponding particles, because the events that ``there are at most $C_\cref{C:unaccounted}n^{d-1/4}$ particles outside $B[a^{1/d}n]$ and yet our particle crossed annulus $i$'' have probability bounded above by $\tfrac 12$, independently of the tree structure. 

We conclude that with superpolynomially large probability, the annulus $R_{\log(n)^2}$ is not reached by any particle, so that $\A_{(a-1)b_n}(B[n]) \subseteq B[na^{1/d}(1+\beta_d\log(n)^2n^{-1/4}]$, which concludes the proof.
\end{proof}

Proposition \ref{converse} and Lemma \ref{lem:increasing} together imply the following corollary, which is a converse to Corollary \ref{lose some} :
\begin{corollary}\label{lose some ext}
There exist $C_{\newC{C:10}}>0$ such that for any set $S \subseteq B[n] $, 
$$\bbP \Big(\forall a\ge 1 \; 
  \A_{(a-1)|S|}(S) \subseteq B([na^{1/d}(1+C_\cref{C:10}n^{-1/5}])\Big)$$
grows superpolynomially in $|S|$.
\end{corollary}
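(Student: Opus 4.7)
The plan is to combine Lemma~\ref{lem:increasing} with Proposition~\ref{converse} to cover the base case $a\in[1,2]$, and then iterate by doubling to handle all $a\ge 1$ simultaneously.

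\textbf{Base case ($a\in[1,2]$).} Given $S\subseteq B[n]$, Lemma~\ref{lem:increasing} provides a coupling under which $\A_{(a-1)|S|}(S)\subseteq \A_{(a-1)b_n(1+C|S|^{-1/8})}(B[n])$ outside an event of probability $C\exp(-|S|^{\delta_5})$. Writing $a'=1+(a-1)(1+C|S|^{-1/8})\in[1,2+o(1)]$, the right-hand side is $\A_{(a'-1)b_n}(B[n])$; Proposition~\ref{converse} (the tiny overshoot of $a'$ past $2$ is harmless, since the proof given extends to $[1,2+\varepsilon]$ without change) then contains this in $B[n(a')^{1/d}(1+Cn^{-1/5d})]$ with superpolynomially large probability. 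Combining $(a')^{1/d}\le a^{1/d}(1+C|S|^{-1/8})$ with the observation that, for $d\ge 2$ and $|S|$ in the regime where the bound is nontrivial, $|S|^{-1/8}\le Cn^{-d/8}\le Cn^{-1/5d}$, the two errors merge into a single $C_{10}n^{-1/5d}$.

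\textbf{Doubling iteration for general $a$.} Define $r_0=n$ and, recursively, $r_{j+1}=r_j\cdot 2^{1/d}(1+Cr_j^{-1/5d})$. Applying the base case inductively with doubling factor $a=2$, and with $(S,n)$ replaced at step $j$ by $(\A_{(2^j-1)|S|}(S),r_j)$, shows $\A_{(2^j-1)|S|}(S)\subseteq B[r_j]$ on an event of the desired probability. Since $r_j\ge n\cdot 2^{j/d}$, the accumulated errors form a convergent geometric series:
\[
\prod_{j\ge 0}(1+Cr_j^{-1/5d})\le \exp\Big(Cn^{-1/5d}\sum_{j\ge 0}2^{-j/(5d^2)}\Big)\le 1+C'n^{-1/5d},
\]
so $r_k\le n\cdot 2^{k/d}(1+C'n^{-1/5d})$ uniformly in $k$. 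For $a\in[2^k,2^{k+1}]$, one final application of the base case with factor $a/2^k\in[1,2]$ starting from $\A_{(2^k-1)|S|}(S)\subseteq B[r_k]$ produces the stated bound $na^{1/d}(1+C_{10}n^{-1/5d})$. The $O(\log a)$ failure probabilities are each superpolynomially small in $2^j|S|\ge |S|$, hence sum to something superpolynomially small in $|S|$; this yields the ``for all $a$'' statement.

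\textbf{Main obstacle.} The delicate point is ensuring the per-stage errors $r_j^{-1/5d}$ telescope to only $O(n^{-1/5d})$ rather than growing with the number of doublings; the geometric decay $r_j^{-1/5d}\le Cn^{-1/5d}2^{-j/(5d^2)}$, coming from $r_j\ge n\cdot 2^{j/d}$, is what makes this work. A secondary issue is reconciling the $|S|^{-1/8}$ error from Lemma~\ref{lem:increasing} with the $n^{-1/5d}$ target error from Proposition~\ref{converse}; the dimension assumption $d\ge 2$ is exactly what renders these compatible in the regime where the conclusion has content.
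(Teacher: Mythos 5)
Your proof is correct and follows essentially the same route the paper intends: couple $\A_{(a-1)|S|}(S)$ into $\A_{(a'-1)b_n}(B[n])$ via Lemma~\ref{lem:increasing}, absorb $a'$ into the error term, apply Proposition~\ref{converse} for $a\le 2$, and iterate by doubling as in Corollary~\ref{lose some}. The paper's own proof is a two-sentence sketch (and in fact slightly misquotes Lemma~\ref{lem:increasing}, writing an overhead of $n^{-1/2}$ rather than the lemma's $|S|^{-1/8}$); you have filled in exactly the two points that sketch glosses over --- that $r_j\ge n\cdot 2^{j/d}$ makes the per-stage multiplicative errors $1+Cr_j^{-1/5d}$ into a convergent product bounded by $1+C'n^{-1/5d}$, and that the failure probabilities at stage $j$ are superpolynomially small in $2^j|S|$ and hence sum to something superpolynomially small in $|S|$. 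One small caveat worth flagging explicitly is that the reduction $|S|^{-1/8}\le Cn^{-1/5d}$ needs $|S|\gtrsim n^{8/(5d)}$; you wave at this with ``in the regime where the bound is nontrivial,'' which is acceptable here (for smaller $|S|$ the aggregate has far too few particles to threaten the target ball, and in every place the corollary is invoked one has $|S|\ge n$, so $n^{-1/8}\le n^{-1/5d}$ for $d\ge 2$), but a careful write-up should say so.
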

This follows from Lemma \ref{lem:increasing}, which states that 
$$\A_{(a-1)|S|}(S)\subset\A_{(a-1)b_n(1+Cn^{-1/2})}(B[n])$$ 
and Proposition \ref{converse} for $a\le 2$. Iterating (as in the end of the proof of Corollary \ref{lose some}) gives the result for general $a$. We omit the details.




\section{Proof of Theorem~\ref{thm:main}}
\subsection{Inner bound}
Our proof requires rough initial bounds before better bounds can be proved. Our rough outer bound is the obvious remark that $\A_n \subset B[n]$ because it is connected. 
For a rough inner bound, we have the following lemma:

\begin{lemma}\label{initiate}
There exists $\delta_6>0$ such that or all $n$ big enough,
$$\bbP\Big(B[n/2] \subseteq \A_{e^{b_n}}\Big)\geq 1-\exp(-n^{\delta_6}).$$
\end{lemma}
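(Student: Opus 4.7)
The approach I would take is a direct path-propagation argument, exploiting the enormous budget of $e^{b_n}$ particles. The stability machinery of Section~\ref{sec:stability} is not needed here; a crude one-particle-at-a-time estimate suffices.

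For each $y \in B[n/2]$, fix once and for all a monotone lattice path $(y_0 = 0, y_1, \ldots, y_\ell = y)$ of length $\ell = |y|_1 \le \sqrt{d}\,n/2$. Let $T_j := \inf\{m : y_j \in \A_m\}$, so $T_0 = 1$. The core observation is: whenever $y_{j-1} \in \A_{i-1}$ and $y_j \notin \A_{i-1}$, the probability that $y_j$ is added at step $i$ is at least $1/(2d(i-1))$, because the new particle starts at $y_{j-1}$ with probability $1/|\A_{i-1}| = 1/(i-1)$ and then steps directly to $y_j$ (exiting the aggregate immediately) with probability $1/(2d)$. Chaining this estimate gives, for any $M > t$,
\[
\bbP\bigl(T_j > M \,\big|\, T_{j-1} = t\bigr)
\;\le\; \prod_{i = t}^{M-1}\!\Bigl(1-\tfrac{1}{2di}\Bigr)
\;\le\; (t/M)^{1/(2d)}.
\]

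I would then set $N := e^{b_n}$ and choose geometric thresholds $M_j := N^{j/\ell}$, so that $M_0 = 1 = T_0$ and $M_\ell = N$. Applying the bound above with $t = M_{j-1}$ and $M = M_j$ gives
\[
\bbP\bigl(T_j > M_j, \ T_{j-1} \le M_{j-1}\bigr) \;\le\; (M_{j-1}/M_j)^{1/(2d)} \;=\; N^{-1/(2d\ell)},
\]
and summing these events over $j = 1, \ldots, \ell$ (with $T_0 = M_0$) yields
\[
\bbP(T_\ell > N) \;\le\; \ell \, N^{-1/(2d\ell)}.
\]
With $\ell \le \sqrt{d}\,n/2$ and $b_n = \omega_d n^d + O(n^{d-1})$, this is bounded by $Cn\exp(-c\,n^{d-1})$. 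A final union bound over the at most $Cn^d$ vertices of $B[n/2]$ gives
\[
\bbP\bigl(B[n/2] \not\subseteq \A_{e^{b_n}}\bigr) \;\le\; Cn^{d+1}\exp(-c\,n^{d-1}),
\]
which for $d \ge 2$ is below $\exp(-n^{\delta_6})$ for any $\delta_6 < d-1$ and all $n$ large enough.

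The only subtle point---not really an obstacle---is matching the exponents. The probability of advancing one vertex along the path at step $i$ decays like $1/i$, so the gain per step is only logarithmic, and the exponential budget $e^{b_n}$ is exactly what one needs: the resulting bound $N^{-1/(2d\ell)} = \exp(-b_n/(2d\ell))$ becomes stretched-exponentially small in $n$ precisely because $b_n/\ell \sim n^{d-1}$ grows, which is where the hypothesis $d \ge 2$ enters.
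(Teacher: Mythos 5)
Your proof is correct but takes a genuinely different route from the paper's. The paper counts the particles that start at the origin: since $X_k=0$ with probability $1/k$, by time $e^{b_n}$ the harmonic sum gives roughly $b_n$ such particles, a Chernoff bound makes this $\geq 2b_n/3$ with overwhelming probability, and then the classical IDLA inner-shape theorem (cf.\ \cite{asselah2010sub}) is invoked to conclude $B[n/2]\subseteq \DA_{2b_n/3}(\{0\})$; implicitly, the abelian property (or Lemma~\ref{crucial lemma}) is used to argue that the other particles can only enlarge the cluster. You instead fix a lattice path to each target vertex and propagate along it one edge at a time, using only the elementary observation that the walker steps directly from $y_{j-1}$ to a not-yet-occupied neighbour $y_j$ and exits there with probability $\geq 1/(2d(i-1))$ at step $i$. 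Your chaining with geometric thresholds $M_j=N^{j/\ell}$ is carried out correctly (the product bound $\prod_{i=t}^{M-1}(1-\tfrac{1}{2di}) \le (t/M)^{1/(2d)}$, the telescoping union bound, and the final arithmetic all check out, and the hypothesis $d\geq 2$ enters exactly where you flag it). What each approach buys: the paper's proof is shorter and leans on heavy machinery already cited elsewhere in the paper; yours is entirely self-contained, requiring no IDLA shape theorem, no coupling lemma, and no appeal to the abelian property. Both ultimately rely on the exponential budget $e^{b_n}$ — the paper through the harmonic sum, you through the per-edge cost $N^{-1/(2d\ell)}=\exp(-b_n/(2d\ell))$ being stretched-exponentially small since $b_n/\ell\sim n^{d-1}$.
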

\begin{proof}
We consider the particles that start from the origin. At step $k$, the event that the new particle starts from the origin has probability $1/k $, hence the number of particles started from the origin by time $e^{b_n}$ has expected value $b_n$ and is bigger than $2b_n/3$ with probability $e^{-b_n/18}$ from a Chernoff bound. Classical \IDLA results (in particular, the explicit bound in \cite[paragraph 3.1.3]{asselah2010sub} is much stronger than what is needed here) guarantee that the standard \IDLA with $2b_n/3$ particles started from the origin covers at least $B([n/2])$ except on an event of stretched exponentially small probability.
\end{proof}

Now that we have a rough bound, we are in a position to improve it. The following proposition states that a rough inner bound can always be improved, provided we have an outer bound as well.

\begin{proposition}\label{win always}
Let $\lambda>1$ and $\ep >0$ be two parameters. Suppose that $B[n] \subseteq S \subseteq B[\lambda n]$ and that $|S|>(1+\ep)b_n$. For a constant $\eta_2$ depending only on the dimension, and uniformly in $S$,
$$B\left[3\lambda n\Big(1+\frac{\eta_2\ep}{\lambda^d}\Big)^{1/d}\right]\subseteq \A_{((3\lambda)^d-1)|S|}(S)$$
with superpolynomially large (in $\ep n$) probability.
\end{proposition}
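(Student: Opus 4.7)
My plan is to use Lemma \ref{crucial lemma} to split the \rIDLA process $\A_M(S)$, with $M=((3\lambda)^d-1)|S|$, into two tractable pieces and then close with a classical inner-shape bound for standard \IDLA. Taking $E=B[n]$ and $F=S$ in Lemma \ref{crucial lemma} gives
\[
\A_M(S) \;\supseteq\; \DA_{S\setminus B[n]}\!\left(\A_M(B[n];|S|)\right).
\]
I view the right-hand side as a subset \rIDLA from $B[n]$, which will essentially fill $B[3\lambda n]$, together with additional \IDLA particles launched from the $|S|-b_n\geq \ep b_n$ excess starting points of $S\setminus B[n]$; these excess particles will be responsible for the extra $(1+\eta_2\ep/\lambda^d)^{1/d}$ factor in the final radius.

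For the subset \rIDLA I apply Corollary \ref{lose some} with $a=(3\lambda)^d$ (so that $(a-1)|S|=M$); the hypothesis $|S|\geq(1+\ep)b_n\geq b_n$ makes this applicable with base $|S|$. Except on a stretched-exponentially small event, $\A_M(B[n];|S|)\supseteq B[N]$ with $N:=3\lambda n(1-Cn^{-1/4})$. For $n$ large, $S\setminus B[n]\subseteq B[\lambda n]\subseteq B[N/2]$, so Lemma \ref{win some} (with its ``$n$'' replaced by $N$), combined with the monotonicity of standard \IDLA in its starting set, yields
\[
\A_M(S) \;\supseteq\; \DA_\kappa(B[N]), \qquad \kappa \sim \mathrm{Bin}(|S\setminus B[n]|,\eta),
\]
where $\eta>0$ is the constant of Lemma \ref{win some}. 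A Chernoff bound then forces $\kappa\geq \eta\ep b_n/2$ outside a failure event of probability $\exp(-c\ep b_n)$, which is superpolynomial in $\ep n$.

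The last ingredient is the classical inner-shape bound for standard \IDLA started from a large ball, taken in the explicit stretched-exponential form of \cite[\S 3.1.3]{asselah2010sub}: applied to $\DA_\kappa(B[N])$, it produces a ball $B[R']$ with $b_{R'}\geq b_N+\kappa-\mathrm{err}(n)$ for a polylogarithmic error $\mathrm{err}(n)$. Using $b_r=\omega_d r^d+O(r^{d-1})$, the required inclusion $R'\geq 3\lambda n(1+\eta_2\ep/\lambda^d)^{1/d}$ reduces to the arithmetic inequality
\[
\bigl(\tfrac{\eta}{2}-3^d\eta_2\bigr)\,\ep\,b_n \;\geq\; C\lambda^d n^{d-1/4}+\mathrm{err}(n),
\]
which is satisfied by choosing $\eta_2:=\eta/(4\cdot 3^d)$, a constant depending only on the dimension as required.

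The hard part is precisely this final arithmetic: the $Cn^{-1/4}$ correction inherited from Corollary \ref{lose some} forces the argument to require $\ep\gtrsim \lambda^d n^{-1/4}$ before the inequality becomes favourable. For smaller $\ep$ the proposition's probability bound (superpolynomial in $\ep n$ rather than in $n$) is itself weak; a fully uniform proof would need to avoid the $n^{-1/4}$ loss, either by iterating Lemma \ref{not loosing too much} more carefully than in the derivation of Corollary \ref{lose some}, or by appealing to the finer logarithmic \IDLA fluctuation results cited in the introduction.
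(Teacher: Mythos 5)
Your proof follows the paper's own argument almost step for step: Lemma~\ref{crucial lemma} with $E=B[n]$, $F=S$; Corollary~\ref{lose some} with $a=(3\lambda)^d$ and $M=|S|$ to produce the ball $B[3\lambda n(1-Cn^{-1/4})]$; Lemma~\ref{win some} on the particles of $S\setminus B[n]$ (which indeed sit in the half-ball); a Chernoff bound on $\kappa$; and the classical inner IDLA estimate to convert the $\kappa$ extra particles into an enlarged ball. The exact constant $\eta_2$ you extract differs from the paper's $\eta/(8\cdot 3^{d-1})$ only cosmetically. So structurally this is the intended proof.

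The concern you raise at the end is well-observed and, notably, is present in the paper's own proof as well: the last step, turning $\kappa\gtrsim\ep b_n$ extra particles launched into $B[3\lambda n(1-Cn^{-1/4})]$ into $B[3\lambda n(1+\eta_2\ep/\lambda^d)^{1/d}]$, requires the added volume to swallow both the $\eta_2\ep/\lambda^d$ gain \emph{and} the $n^{-1/4}$ deficit left by Corollary~\ref{lose some}, which as you compute demands roughly $\ep\gtrsim\lambda^d n^{-1/4}$. The paper never verifies this. Two remarks, though. First, the regime $\ep\lesssim n^{-1}$ is genuinely harmless, since $\ep n\lesssim 1$ makes the ``superpolynomial in $\ep n$'' bound vacuous; the delicate window is $n^{-1}\lesssim\ep\lesssim\lambda^d n^{-1/4}$, where the stated bound is not vacuous and your (and the paper's) argument does not reach it. Second, in the only place Proposition~\ref{win always} is applied (the iteration in Proposition~\ref{sharp inner}), the parameters do stay in the favourable regime: one checks that $\ep_i$ decreases by the factor $1-c\eta_2/\lambda_i^d$ per step while $r_i$ grows by the factor $\approx 3\lambda_i$, and since $\eta_2$ can be taken small, $(3\lambda_i)^{1/4}(1-c\eta_2/\lambda_i^d)>1$ for all $\lambda_i\ge 2^{1/d}$, so the invariant $\ep_i\gtrsim\lambda_i^d r_i^{-1/4}$ is propagated (and holds initially because $\ep_0$ is enormous). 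So the overall scheme is sound, but you are right that either the proposition should carry the extra hypothesis $\ep\ge C\lambda^d n^{-1/4}$, or the proof should say explicitly why the remaining range is covered; as written, neither you nor the paper closes that gap.
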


There are two steps in the proof. We first look at the growth of $B[n]$ while ignoring completely the sites in $S\setminus B[n]$. Then, we use sites of $S\setminus B[n]$ (which are not too far from the origin). These sites represent a tiny proportion of $\A_{((3\lambda)^d-1)|S|}(S)$, but it is more than sufficient to counter the loss of the first step. Lemma~\ref{crucial lemma} is crucial in this argument.
\begin{proof}
We know from Corollary~\ref{lose some} that the ball of radius $3\lambda n(1-C_\cref{C:inner2}n^{-1/4})$ is included in the subset \rIDLA aggregate $\A_{((3\lambda)^d-1)|S|}(B[n];|S|)$ with probability greater than $1-\exp(-n^{\delta_3})$.

Lemma~\ref{crucial lemma} now yields that the aggregate we are interested in stochastically dominates the one built by adding the particles of $S\setminus B[n]$ to the subset \rIDLA aggregate $\A_{((3\lambda)^d-1)|S|}(B[n];|S|)$. In a formula,
\begin{align*}
&\bbP\Big(E
  \subseteq \A_{((3\lambda)^d-1)|S|}(S) \Big) \\
& \geq \bbP\Big(E
  \subseteq \DA_{S\setminus B[n]}\Big(\A_{((3\lambda)^d-1)|S|}(B[b];|S|)\Big) \Big) \\
& \geq \bbP\Big(E
  \subseteq \DA_{S\setminus B[n]}\Big(B[3\lambda n(1-C_\cref{C:inner2}n^{-1/4})]\Big) \Big) 
   - \exp(-n^{\delta_3}).
\end{align*}
(this holds for any set $E$ but, for the curious, we will eventually use it for $E=B\big[3\lambda n(1+\eta_2\ep/\lambda^d)^{1/d}\big]$, i.e.\ the set from the statement of the lemma).

Next, since $S \subset B[\lambda n]$, all the points in $S\setminus B[n]$ are inside the half-radius of $B[3\lambda n(1-C_\cref{C:inner2}n^{-1/4})]$, and we are in a position to apply Lemma~\ref{win some}. It yields the following:
\begin{align*}
& \bbP\Big(E
  \subseteq \DA_{S\setminus B[n]}\Big(B[3\lambda n(1-C_\cref{C:inner2}n^{-1/4})]\Big) \Big) \\
& \geq \bbP\Big(E\subseteq \DA_{\kappa}\Big(B[3\lambda n(1-C_\cref{C:inner2}n^{-1/4})]\Big)\Big),
\end{align*}
where $\kappa$ is a random variable following a binomial law with  $|S|-b_n >\ep b_n$ trials and probability of success $\eta$, and $\eta>0$ is the constant defined in Lemma~\ref{win some}.

Now, applying Chernoff's bound yields that $\kappa > \frac{3}{4}\eta \ep b_n$ with probability higher than $1-\exp(-c\ep b_n)$. This means that the number of particles added is not too small. This fact, together with the inner bound for standard \IDLA\ (from \cite{asselah2010sub,jerison2010logarithmic} once again), guarantees that with exponentially high probability, $\DA_{\kappa}\Big(B[3\lambda n(1-C_\cref{C:inner2}n^{-1/4})]\Big)$ contains a ball of radius $3\lambda n(1+\eta\ep/(8\cdot 3^{d-1}\lambda^d))^{1/d}$. The lemma thus holds with the value $\eta_2=\eta/(8\cdot 3^{d-1})$.
%
%
%
\end{proof}

This method for improving inner bounds enables us to prove the inner part of Theorem~\ref{thm:main}.

\begin{proposition}\label{sharp inner}
Let $d\geq 2$. There exists constants $c_\cref{c:thm}$, $C_\cref{C:Thm}$ depending only on the dimension such that almost surely,
$$B\Big[n(1-C_\cref{C:Thm}n^{-c_\cref{c:thm}} ) \Big] \subseteq \A_{b_n},$$
for $n$ large enough.
\end{proposition}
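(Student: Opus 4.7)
The strategy is iterative refinement: seed the inner bound using Lemma~\ref{initiate}, improve it via $O(\log n)$ applications of Proposition~\ref{win always} (with Corollary~\ref{lose some ext} tracking the outer bound), and if necessary close the final gap with one application of Corollary~\ref{lose some} composed with Lemma~\ref{crucial lemma} so as to hit the exact particle count $b_n$.

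Concretely, set $n_0 := \lfloor (\log n/(2\omega_d))^{1/d} \rfloor$ so that $T_0 := e^{b_{n_0}} \leq b_n$. By Lemma~\ref{initiate} and time-monotonicity of $(\A_t)_t$, with probability at least $1 - \exp(-n_0^{\delta_6})$ one has $B[n_0/2] \subseteq \A_{T_0} \subseteq \A_{b_n}$, which seeds the iteration with inner radius $r_0 = n_0/2$, particle count $T_0$, and an initial outer bound $R_0$. Inductively, given $B[r_k] \subseteq \A_{T_k} \subseteq B[R_k]$, set $\lambda_k := R_k/r_k$ and $\ep_k := T_k/b_{r_k} - 1$. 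Proposition~\ref{win always} and Corollary~\ref{lose some ext} (applied with $a = (3\lambda_k)^d$) give, with superpolynomial probability,
\[
r_{k+1} = 3\lambda_k r_k (1+\eta_2\ep_k/\lambda_k^d)^{1/d}, \quad T_{k+1} = (3\lambda_k)^d T_k, \quad R_{k+1} = 3\lambda_k R_k(1+CR_k^{-1/5d}).
\]

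A direct computation yields the key identity $1+\ep_{k+1} = (1+\ep_k)/(1+\eta_2\ep_k/\lambda_k^d)$, which after $K$ iterations telescopes to
\[
r_K = (T_K/\omega_d)^{1/d}(1+\ep_K)^{-1/d}.
\]
Tuning the iteration so that $T_K$ equals $b_n$ (closing any residual gap with Corollary~\ref{lose some}), the target bound $r_K \geq n(1-C_\cref{C:Thm} n^{-c_\cref{c:thm}})$ reduces to showing $\ep_K = O(n^{-c_\cref{c:thm}})$. Since $\ep_{k+1} \leq \ep_k(1 - \eta_2/\lambda_k^d)$, a uniform bound $\lambda_k \leq \lambda_{\max}$ gives geometric decay of $\ep_k$ and hence the desired $\ep_K = O(n^{-c})$ in $K = O(\log n)$ iterations.

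The main obstacle is controlling $\lambda_k$: it must stay bounded for the $\ep$-recursion to deliver fast decay, yet the trivial choice $R_0 = T_0$ makes $\lambda_0$ polynomial in $n$. Examining the $\lambda$-recursion
\[
\lambda_{k+1} = \lambda_k \cdot \frac{1+CR_k^{-1/5d}}{(1+\eta_2\ep_k/\lambda_k^d)^{1/d}}
\]
suggests a two-phase analysis: an initial ``shape-correction'' phase in which $\lambda_k$ drops from polynomial to $O(1)$ (driven by the denominator, which is substantially larger than one whenever $\ep_k/\lambda_k^d$ is non-negligible, i.e.\ whenever the current outer ball is reasonably well-filled), followed by a ``refinement'' phase in which $\lambda_k$ stabilises near $1$ and $\ep_k$ enjoys the geometric decay. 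Carefully balancing these two regimes, so that the total number of iterations is $O(\log n)$ while $\ep_k r_k$ stays at least polynomial in $n$ throughout (ensuring superpolynomial probability per step from Proposition~\ref{win always}), is the technical heart of the argument. A union bound over the $O(\log n)$ iterations then preserves superpolynomial probability at each $n$, and Borel--Cantelli delivers the almost sure statement.
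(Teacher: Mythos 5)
Your framework is essentially the right one --- seed with Lemma~\ref{initiate}, iterate Proposition~\ref{win always} while tracking an outer bound, and let the $\ep$-recursion $1+\ep_{k+1}=(1+\ep_k)/(1+\eta_2\ep_k/\lambda_k^d)$ drive the improvement --- and you correctly identify the obstacle: $\lambda_k$ must stay bounded for the recursion to be useful. However, the $n$-dependent seeding is a genuine gap that your sketched ``two-phase analysis'' does not repair.

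Concretely, with your choice $n_0\asymp(\log n)^{1/d}$ one gets $T_0\asymp n^{1/2}$, $r_0\asymp(\log n)^{1/d}$, $R_0\asymp n^{1/2}$, hence $\lambda_0\asymp n^{1/2}/(\log n)^{1/d}$ and $\ep_0\asymp n^{1/2}/\log n$. The ratio driving the decrease of $\lambda$ (and of $\ep$) is $\ep_k/\lambda_k^d$, and with these values $\ep_0/\lambda_0^d\asymp n^{(1-d)/2}$, which tends to $0$ for $d\ge 2$. So the denominator in your $\lambda$-recursion is $1+o(1)$, not ``substantially larger than one,'' and $\lambda_k$ barely decreases. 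Meanwhile each application of Proposition~\ref{win always} multiplies the particle count by $(3\lambda_k)^d\asymp n^{d/2}$, so $T_k$ overshoots $b_n\asymp n^d$ after $O(1)$ steps --- long before $\lambda$ has dropped to $O(1)$ or $\ep$ has dropped to a power of $n$. There is no regime in which the total number of iterations is $\Theta(\log n)$ while $\lambda_k$ is simultaneously brought down from polynomial to bounded; the geometric growth of $T_k$ forbids it. So the ``shape-correction phase'' never happens, and closing the gap with one application of Corollary~\ref{lose some} cannot salvage an $\ep$ that is still of size $n^{1/2}/\log n$.

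The paper resolves this differently, and the difference is essential: the seed $m_0$ is fixed \emph{independently of} $n$, and the iteration is run once and for all across every scale. With a fixed $m_0$, the quantity $\lambda_0$ is a (possibly large) constant depending on $m_0$ but not on $n$; consequently $M_i$ grows at a bounded geometric rate, $\Theta(\log n)$ iterations genuinely occur before $M_i$ reaches $b_n$, and $\ep_i$ decays geometrically, giving $\ep_i\le CM_i^{-c}$. The bad events have probability superpolynomially small in $m_0$ (rather than in $n$), which is enough: since this holds for every fixed $m_0$, almost surely some $m_0$ works, and the constant $C(m_0)$ is absorbed by halving the exponent. The paper also applies Corollary~\ref{lose some ext} just once, at the start, to obtain a single global outer bound $\A_n\subseteq B[\tau n^{1/d}]$ with $\tau=\tau(m_0)$, rather than re-estimating $R_k$ at each step as you propose; this simplifies the bookkeeping. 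In short: the iteration you wrote down is right, but its analysis only closes if the seed is $n$-independent, and the almost-sure conclusion is reached by a union over fixed seeds, not by Borel--Cantelli over $n$ with an $n$-dependent seed.
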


\begin{proof}
Lemma~\ref{initiate} and the remark before it provide us with the following bounds: for arbitrarily large $m_0$, with probability at least $1-\exp(-m_0^{\delta_6})$, 
$B[m_0]\subseteq \A_{e^{2b_{m_0}}} \subseteq B[e^{2b_{m_0}}].$

Corollary \ref{lose some ext} then guaranties that conditionally on the previous event, with superpolynomially (in $m_0$) large probability,
$$
\A_{(a^{d}-1)e^{2b_{m_0}}} 
\subseteq B[ae^{2b_{m_0}}(1+C_\cref{C:10}e^{-2b_{m_0}/9d})]
$$ 
for any $a$. In other words, for some $\tau=\tau(m_0)$, $\A_{n}\subseteq B[\tau n^{1/d}]$ for all $n$. Let us also assume that $\tau$ is sufficiently large so that $b_{\tau n^{1/d}}\ge 2n$ (though it would have probably held even if we had not assumed it explicitly).

We now repeatedly apply Proposition~\ref{win always}, starting from $m_0$. Recall that it states that if $B[r]\subseteq \A_M\subseteq B[\lambda r]$ and if $M>(1+\ep)b_r$, then with high probability, for some other $r'$ and $M'$ we have $B[r']\subset \A_{M'}$. So applying the proposition repeatedly gives sequences $r_i$ and $M_i$ such that, with high probability,  $B[r_i]\subseteq \A_{M_i}$. Let us list all relevant parameters:
\begin{align*}
r_0&=m_0&r_{i+1}&=3\lambda_ir_i\Big(1+\frac{\eta_2\ep_i}{\lambda_i^d}\Big)^{1/d}\\
M_0&=e^{2b_{m_0}}&M_{i+1}&=(3\lambda_i)^dM_i\\
\ep_i&=M_i/b_{r_i}-1&\lambda_i&=\tau M_i^{1/d}/r_i
\end{align*}
The constant $\eta_2$ comes from Proposition \ref{win always}, but we may assume that it is small enough, so let us assume $\eta_2\le \tfrac 12$. 

Let us now analyse these parameters. We first note that $\lambda_i$ is decreasing --- indeed, $M_i^{1/d}$ is increased at each step by $3\lambda_i$ while $r_i$ is increased by more. On the other hand, we always have $B[r_i]\subseteq A_{M_i}$ and $b_{\tau M_i^{1/d}}\ge 2M_i$ so 
\[
\lambda_i^d=\Big(\frac{\tau M_i^{1/d}}{r_i}\Big)^d=\frac{b_{\tau M_i^{1/d}}}{b_{r_i}}
\ge \frac{2M_i}{M_i}=2
\]
so $\lambda_i\ge 2^{1/d}$ for all $i$. 

More important is the behaviour of $\ep_i$. Putting together the formulas for $r_{i+1}$ and $M_{i+1}$ gives
\[
\ep_{i+1}+1=\frac{M_{i+1}}{b_{r_{i+1}}} = 
\frac{(3\lambda_i)^dM_i}{(3\lambda_i)^db_{r_i}(1+\eta_2\ep_i/\lambda_i^d)}=
\frac{\ep_i+1}{1+\eta_2\ep_i/\lambda_i^d}
\]
rearranging gives
\[
\ep_{i+1}\le \ep_i-\frac{c\ep_i}{\lambda_i^d}
\]
and since $\lambda_i$ is bounded above, we get that $\ep_i$ decreases exponentially in $i$. 

On the other hand, $\ep_i$ does not decrease too fast: since we assumed $\eta_2\le \tfrac 12$, and since $\lambda_i^d\ge 2$ we get that $\ep_{i+1}\ge \min\{1,\tfrac 12 \ep_i\}$. Since $M_{i+1}\ge 3M_i$, we get that $M_i$ increases faster than $\ep_i$ decreases. This is important because the bad event of Proposition \ref{win always} happens with probability superpolynomially small in $M_i\ep_i$. Thus we have just shown that these bad events have summable probabilities, and further, the sum is superpolynomially small in $m_0$. 

This establishes the proposition on a sequence. Indeed, since $M_i$ increases no more than exponentially, then we get $\ep_i\le CM_i^{-c}$. So we get $B[r_i]\subseteq A_{b_{r_i}(1-r_i^{-c})}$, which is equivalent to the claim. To extend from a subsequence to all integers, we use Corollary \ref{lose some}. We get that between $M_i$ and $M_{i+1}$ the contained ball still follows the volume of the aggregate up to a polynomial probability, with superpolynomially large probability (in $M_i$). These probabilities may be summed.

All in all we get that for some $c_\newc{c:twice}$ independent of $m_0$ and some $C(m_0)$, the event
\[
B[n(1-C(m_0)n^{-c_\cref{c:twice}})]\subset A_{b_n}\qquad\forall  n
\]
holds with superpolynomially large probability. This means that almost surely, it does indeed hold for some $m_0$, and this means that the proposition holds with $c_\cref{c:thm}=\tfrac 12 c_\cref{c:twice}$, and an arbitrary $C_\cref{C:Thm}$. 
\end{proof}

\subsection{Outer bound}

\begin{proposition}\label{thm:sub2}
  Almost surely, $A_{b_n}\subseteq  B\big[n(1+Cn^{-c})\big]$.
\end{proposition}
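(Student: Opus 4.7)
The plan is to mimic the strategy of Proposition~\ref{converse}, using the inner bound from Proposition~\ref{sharp inner} in place of Corollary~\ref{lose some} (which was applicable only when the starting configuration is itself a ball). By Proposition~\ref{sharp inner}, almost surely for all $n$ large enough, $B[n(1-C_\cref{C:Thm}n^{-c_\cref{c:thm}})] \subseteq \A_{b_n}$; combined with $|\A_{b_n}|=b_n$ this forces $|\A_{b_n}\setminus B[n]| = O(n^{d-c_\cref{c:thm}})$, and monotonicity of the aggregate in time gives $|\A_t\setminus B[n]| = O(n^{d-c_\cref{c:thm}})$ for every $t\le b_n$.

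I then partition the exterior of $B[n]$ into concentric annuli $R_k = B[n+(k+1)w]\setminus B[n+kw]$ of width $w = \beta_d\, n^{1-c_\cref{c:thm}/d}$, where $\beta_d$ is chosen large enough that $O(n^{d-c_\cref{c:thm}}) \le \ep\, b_w$ for the constant $\ep$ appearing in Lemma~\ref{lem:exit prob} with $p=1/2$. The count above ensures that each annulus remains sparse throughout the construction of $\A_{b_n}$, so Lemma~\ref{lem:exit prob} applies: for any $k\ge 0$, any \rIDLA random walk starting in $B[n+kw]$ has probability at most $1/2$ of exiting the current aggregate through $\partial B[n+(k+1)w]$.

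Next I consider the genealogical tree $\T_{b_n}(\{0\})$ rooted at the origin. For each particle $x$ in the aggregate, define its annular index $k_x := \lfloor (|x|-n)_+/w\rfloor$. Iterating the exit estimate above shows that whenever $c$ is the child of $p$ in the tree, the increment $k_c-k_p$ is stochastically dominated by an independent $\mathrm{Geom}(1/2)$ variable -- this uses the same coupling device employed at the end of Proposition~\ref{converse}'s proof, realising the annulus crossings via geometric random variables that are independent of the forest's combinatorial structure. Applying Lemma~\ref{lem:distance} to $\T_{b_n}(\{0\})$ with these geometric edge weights then yields a maximum reaching time at most $\log(n)^2$ with superpolynomially large probability. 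Since the root satisfies $k_0=0$, this gives $k_x\le \log(n)^2$ for every $x\in\A_{b_n}$, and consequently $|x|\le n\bigl(1+Cn^{-c_\cref{c:thm}/d}\log(n)^2\bigr)$.

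Choosing any exponent $c$ strictly smaller than $c_\cref{c:thm}/d$ absorbs the polylogarithmic factor, giving $\A_{b_n}\subseteq B[n(1+Cn^{-c})]$ with superpolynomially large probability for every large $n$, and a Borel--Cantelli argument then provides the almost-sure statement. The main subtlety, as in Proposition~\ref{converse}, will be to make rigorous the stochastic-domination of the $k_c-k_p$ increments by \emph{independent} geometric variables: one must set up the coupling between annulus crossings and geometric variables in a way that keeps the latter independent of the tree structure, exactly as at the end of that earlier proof.
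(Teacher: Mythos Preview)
Your proposal is correct and follows essentially the same approach as the paper's own proof: use Proposition~\ref{sharp inner} to bound the number of particles outside $B[n]$ by $O(n^{d-c_\cref{c:thm}})$ throughout the construction, slice the exterior into annuli thin enough that Lemma~\ref{lem:exit prob} applies, and then combine with the reaching-time bound of Lemma~\ref{lem:distance} on the genealogical tree $\T_{b_n}(\{0\})$. You are in fact slightly more careful than the paper in tracking the annulus width as $n^{1-c_\cref{c:thm}/d}$ and in making the Borel--Cantelli step explicit.
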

\begin{proof}The proof is identical to the proof of Proposition \ref{converse}, but using the inner bound (Proposition \ref{sharp inner}) as a basis. Let us recall quickly the argument. Proposition \ref{sharp inner} ensures that $B[n(1-Cn^{1-c})]\subset\A_{b_n}$ but that leaves only $Cn^{d-c}$ particles unaccounted for (and possibly outside $B[n]$). Hence the same holds during the entire process up to time $b_n$.
Lemma \ref{lem:exit prob} then ensures that annuli of width $Cn^{1-c}$ around $B[n]$ are difficult to cross for a random walker, hence none of the particles cross more than $\log n$ of them. Finally, Lemma \ref{lem:distance} ensures that none of the trees of $\T_{b_n}(\{0\})$ has depth larger than $\log^2n$, so no particle may end up further than $Cn^{1-c}\log^3 n$. This ends the proof.
\end{proof}

\newpage
\appendix

\section*{Appendix : Proof of Lemma \ref{st-exp-bound}}
The following is a guide on how to read \cite{asselah2010logarithmic} and modify the authors' proof to obtain the desired result. Note than any reference used in the following is to be found inside \cite{asselah2010logarithmic}. First, we start with an already occupied region $B[n]$. The flashing process introduced by the authors is the same, except it only starts flashing after exiting $B[n]$, and the coupling they describe holds true in our setting. Therefore an interior bound like the one we wish to prove can be proven in the flashing process setting. Then, in paragraph 4.2, we will use $\xi = nr^{3/2}$ (instead of $\log(n)$) in equation (4.10). Moreover we follow the authors' recommandation to use a constant $h_k=h$ when dealing with \IDLA.

Recall that $W_k(\mathcal{T})$ is the number of unsettled explorers (out of our $N$ initial explorers) that stand in a cell $\mathcal{T}$ when the cluster is built up to radius $r_k$, and that $M_k(\mathcal{T})$ is the number of explorers that exit $B[r_k]$ through $\mathcal{T}$. Bounding the number of settled explorers that exit $B[r_k]$ through $\mathcal{T}$ by saying at most one can have settled on each site of $B[r_k-h]\setminus B[n]$, we get the usual equation:
$$ W_k(\mathcal{T})+ L_k(\mathcal{T}) \geq M_k(\mathcal{T}), $$
with $L_k(\mathcal{T})$ the number of particles that exit $B[r_k]$ through $\mathcal{T}$ when one is started on each site of $B[r_k-h]\setminus B[n]$. Note that here we stray from the authors' proof since our $L_k$ is not the same as theirs. Hence we need to check that we still have the desired value for $\mathbb{E}\left[ M_k(\mathcal{T}) - L_k(\mathcal{T}) \right]$. The computation is done in a subparagraph called {\it Step 1 } which we emulate here. Recall that for an integer-valued function $f$, $M( f , r_k, \mathcal{T})$ is the number of particles that exit $B[r_k]$ through $\mathcal{T}$ when $f(x)$ particles are started at each point $x$ independently. We follow the authors in also denoting $M( A , r_k, \mathcal{T}) = M( \mathbf{1}_A , r_k, \mathcal{T})$ when $A$ is a subset of $\mathbb{Z}^d$.
\begin{eqnarray*}
\mathbb{E}\left[ M_k(\mathcal{T}) - L_k(\mathcal{T}) \right] &=& \mathbb{E}\left[ M(N \textbf{1}_0, r_k, \mathcal{T}) \right] - \mathbb{E}\left[ M( B[r_k-h]\setminus B[n], r_k, \mathcal{T}) \right] \\
& =& \mathbb{E}\left[ M((b_n+N-b_{r_k-h}) \textbf{1}_0, r_k, \mathcal{T}) \right] \\
&& + \mathbb{E}\left[ M(B[n], r_k, \mathcal{T}) \right]- \mathbb{E}\left[ M(b_n \textbf{1}_0, r_k, \mathcal{T}) \right] \\
&& + \mathbb{E}\left[ M(b_{r_k-h} \textbf{1}_0, r_k, \mathcal{T}) \right] - \mathbb{E}\left[ M(B[r_k-h], r_k, \mathcal{T}) \right]
\end{eqnarray*}

The absolute values of the differences on the second and third lines are then each bounded using Corollary 5.4, so that we get for some constants $C, \kappa$ depending only on the dimension,
\begin{eqnarray*}
\mathbb{E}\left[ M_k(\mathcal{T}) - L_k(\mathcal{T}) \right] & \geq & \left( b_n + N - b_{r_k-h} \right) \mathbb{P}_0(S(H_k)\in \mathcal{T}) - C \\
& \geq & \left( b_n + N - b_{r_k-h} \right) \frac{\kappa}{r_k^{d-1}}.
\end{eqnarray*}
Note that as long as $r_k-h \leq n(1+r-r^{3/2})^{1/d}$, the first factor on the right hand side (that is, the difference in volumes) is bigger than $\kappa_d n^d r^{3/2}$ for a constant $\kappa_d$ depending only on the dimension.

We then resume the course of the authors' proof, noting that the bound (4.14) does not concern us since the order of the left-hand-side is at most logarithmic (see (4.18)). Finally, we conclude in (4.15) by replacing the authors' $\log(n)$ with $nr^{3/2}$.

\newpage
\paragraph{Acknowledgements}The authors would like to thank Vincent Beffara and Vladas Sidoravicius for many fruitful discussions, and Nicolas Curien for referring us to \cite{FK}. This paper was partly written during the visit of the second and fourth authors to the Weizmann Institute in Israel. The first author is
the incumbent of the Renee and Jay Weiss Professorial Chair. The
second author was supported by the EU Marie-Curie RTN CODY, the ERC AG
CONFRA, as well as by the Swiss {FNS}. The
third author was supported by the Israel Science Foundation.

\begin{flushright}
\footnotesize\obeylines
  \textsc{Weizmann Institute}
  \textsc{Rehovot, Israel}
  \textsc{E-mail:} \texttt{itai.benjamini@weizmann.ac.il ; gady.kozma@weizmann.ac.il }
$ $\\
  \textsc{Universit\'e de Gen\`eve}
  \textsc{Gen\`eve, Switzerland}
  \textsc{E-mail:} \texttt{hugo.duminil@unige.ch}
$ $\\
  \textsc{LPSM, Universit\'e Denis Diderot}
  \textsc{Paris, France}
  \textsc{E-mail:} \texttt{lucas@lpsm.paris}
\end{flushright}

\end{document}